\setlist{font=\normalfont}
\newtheorem{theorem}{Theorem}[section]
\newtheorem{proposition}[theorem]{Proposition}
\newtheorem{corollary}[theorem]{Corollary}
\newtheorem{lemma}[theorem]{Lemma}
\theoremstyle{definition}
\newtheorem{definition}[theorem]{Definition}
\numberwithin{equation}{section}
\newcommand{\R}{\mathbb{R}}
\newcommand{\N}{\mathbb{N}}
\newcommand{\Z}{\mathbb{Z}}
\newcommand{\Gam}{\Gamma}
\newcommand{\vphi}{\varphi}
\newcommand{\aut}[1]{\mathrm{Aut}\,{(#1)}}
\newcommand{\gyr}[2]{{\mathrm{gyr}[{#1}]}{#2}}
\newcommand{\id}[1]{\mathrm{id}_{#1}}
\newcommand{\res}[2]{{#1}\big|_{{#2}}}
\newcommand{\cols}[1]{\mathbf{\mathcal{#1}}}
\newcommand{\lcap}[2]{\displaystyle\bigcap_{#1}^{#2}}
\newcommand{\gen}[1]{\langle#1\rangle}
\newcommand{\set}[1]{\{#1\}}
\newcommand{\cset}[2]{\set{{#1}\colon{#2}}}
\newcommand{\abs}[1]{|#1|}
\begin{document}
\title[Lagrange's Theorem for Gyrogroups and the Cauchy Property]{Lagrange's Theorem for Gyrogroup\\ and the Cauchy Property}

\author{Teerapong Suksumran}
\address{Department of Mathematics and Computer Science, Faculty of Science,
Chulalongkorn University, Phyathai Road, Patumwan, Bangkok 10330,
Thailand} \email{sk\_teer@yahoo.com \textrm{and
}kwiboonton@gmail.com}

\author{Keng Wiboonton}
\address{}
\email{}

\begin{abstract}
In this paper, we prove a version of Lagrange's theorem for
gyro-groups and use this result to prove that gyrogroups of
particular orders have the strong Cauchy property.
\end{abstract}
\subjclass[2010]{20N05, 18A32, 20A05} \keywords{gyrogroup,
Lagrange's Theorem, Cauchy property, Bol loop,
$\mathrm{A}_\ell$-loop} \maketitle

\section{Introduction}
\par Lagrange's theorem (that the order of any subgroup of a finite group $\Gam$ \mbox{divides} the order
of $\Gam$) is well known in group theory and has impact on several
branches of mathematics, especially finite group theory,
combinatorics, and number \mbox{theory}. Lagrange's theorem proves
useful for unraveling mathematical structures. For \mbox{instance},
it is used to prove that any finite field must have prime power
order. Certain classification theorems of finite groups arise as an
application of Lagrange's theorem \cite{JGDM1995, AS1997, JG2001}.
Further, Fermat little's theorem and Euler's theorem may be viewed
as a consequence of this theorem. Also relevant are the
orbit-stabilizer theorem and the Cauchy-Frobenius lemma (or
Burnside's lemma). A history of Lagrange's theorem on groups can be
found in \cite{RR2001}.

\par In loop theory, the Lagrange property becomes a nontrivial issue.
For example, whether Lagrange's theorem holds for Moufang loops was
an open problem in the theory of Moufang loops for more than four
decades \cite[p. 43]{OCMKARPV2003}. This problem was answered in the
affirmative by Grishkov and Zavarnitsine \cite{AGAZ2005}. In fact,
not every loop satisfies the Lagrange property as one can construct
a loop of order five containing a subloop of order two.
Nevertheless, some loops satisfy the Lagrange property.

\par Baumeister and Stein \cite{BBAS2011} proved a version of
Lagrange's theorem for Bruck loops by studying in detail the
structure of a finite Bruck loop. Foguel et al. \cite{TFMKJP2006}
proved that left Bol loops of \textit{odd} order satisfy the strong
Lagrange property. It is, however, still an open problem whether or
not Bol loops satisfy the Lagrange property \cite[p. 592]{TFMK2010}.
In the same spirit, we focus on the Lagrange property for
\textit{gyrogroups} or \textit{left Bol loops with the
$\mathrm{A}_\ell$-property} in the loop literature. In
\cite{TSKW2014}, we proved that the order of an
\textit{L-subgyrogroup} of a finite gyrogroup $G$ divides the order
of $G$. In this paper, we extend this result by proving that the
order of \textit{any} subgyrogroup of $G$ divides the order of $G$,
see Theorem \ref{Thm: Lagrange's Theorem for Gyrogroup}.

\par A gyrogroup is a group-like structure, introduced by Ungar,
arising as an algebraic structure that the set of relativistically
admissible vectors in $\R^3$ with  Einstein addition encodes
\cite{AU2007SD}. The origin of a gyrogroup is described in
\cite[Chapter 1]{AU2009MC}. There are two prime examples of
gyrogroups, namely the \textit{Einstein gyrogroup}, which consists
of the relativistic ball in $\R^3$ with Einstein addition
\cite{AU2007SD}, and the \textit{M\"{o}bius gyrogroup}, which
consists of the complex unit disk with M\"{o}bius addition
\cite{AU2008MAA}.

\par In this paper, we prove that Lagrange's theorem holds for
gyrogroups and apply this result to show that finite gyrogroups of
particular orders have the Cauchy property. Our results are strongly
based on results by Foguel and Ungar \cite{TFAU2000} and Baumeister
and Stein \cite{BBAS2011}. For basic terminology and definitions in
loop theory, we refer the reader to \cite{HK2002, RB1971, HP1991}.

\section{Gyrogroups}
\par In this section, we summarize definitions and basic properties
of gyrogroups. Much of this section can be found in \cite{AU2008}.

\par Let $(G, \oplus)$ be a magma. Denote the group of automorphisms
of $G$ with respect to $\oplus$ by $\aut{G,\oplus}$.
\begin{definition}[{\hskip-0.5pt}\cite{AU2008}]
\label{Def: Main definition of gyrogroup} A magma $(G,\oplus)$ is a
\emph{gyrogroup} if its binary operation satisfies the following
axioms:
\begin{enumerate}
    \item [(G1)] $\exists0\in G\,\forall a\in G$, $0\oplus a =
    a$;\hfill(left identity)
    \item [(G2)] $\forall a\in G\,\exists b\in G$, $b\oplus a = 0$;\hfill(left inverse)
    \item [(G3)] $\forall a, b\in G\,\exists\gyr{a, b}{}\in\aut{G,\oplus}\,\forall c\in G$,
    \begin{equation}\tag{left gyroassociative law} a\oplus (b\oplus c) = (a\oplus b)\oplus\gyr{a,
    b}{c};\end{equation}
    \item [(G4)] $\forall a, b\in G$, $\gyr{a, b}{} = \gyr{a\oplus b,
    b}{}$.\hfill(left loop property)
\end{enumerate}
\end{definition}

\par The following theorem gives a characterization of a gyrogroup.
\begin{theorem}[{\hskip-0.5pt}\cite{TFAU2000}]
\label{Thm: Alternative def. of gyrogroup} Suppose that $(G,\oplus)$
is a magma. Then $(G,\oplus)$ is a gyrogroup if and only if
$(G,\oplus)$ satisfies the following properties:
\begin{enumerate}
\item [(g1)] $\exists 0\in G\forall a\in G, 0\oplus a = a$ and $a\oplus 0 =
a$; \hfill\emph{(two-sided identity)}
\item [(g2)] $\forall a\in G\exists b\in G, b\oplus a = 0$ and $a\oplus b = 0$.\hfill\emph{(two-sided inverse)}\\
 For $a, b, c\in G$, define
\begin{equation}\tag{gyrator identity}
 \gyr{a, b}{c} = \ominus(a\oplus
b)\oplus(a\oplus (b\oplus c)),\end{equation} then
\item [(g3)] $\gyr{a,
b}{}\in\aut{G,\oplus}$;\hfill\emph{(gyroautomorphism)}
\item [(g3a)] $a\oplus (b\oplus c) = (a\oplus b)\oplus\gyr{a, b}{c}$;\hfill\emph{(left gyroassociative
law)}
\item [(g3b)] $(a\oplus b)\oplus c = a\oplus (b\oplus\gyr{b, a}{c})$;\hfill\emph{(right gyroassociative
law)}
\item [(g4a)] $\gyr{a, b}{} = \gyr{a\oplus b, b}{}$;\hfill\emph{(left loop
property)}
\item [(g4b)] $\gyr{a, b}{} = \gyr{a, b\oplus a}{}$.\hfill\emph{(right loop
property)}
\end{enumerate}
\end{theorem}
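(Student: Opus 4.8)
The plan is to prove the two implications separately, putting essentially all of the work into the forward direction. The converse is nearly immediate: if $(G,\oplus)$ satisfies (g1)--(g4b), then the two-sided identity of (g1) is in particular a \emph{left} identity, giving (G1); the two-sided inverse of (g2) is in particular a \emph{left} inverse, giving (G2); the map $\gyr{a,b}{}$ fixed by the gyrator identity is an automorphism by (g3) and satisfies the left gyroassociative law by (g3a), so it witnesses the existential in (G3); and (g4a) is verbatim (G4). I would therefore dispatch this direction in a couple of lines.

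For the forward direction assume (G1)--(G4); the challenge is that only left-handed data are available. First I would prove a \emph{left cancellation law}: each map $L_a\colon x\mapsto a\oplus x$ is injective. Indeed, if $b$ is a left inverse of $a$, then by (G3) and (G1), $b\oplus(a\oplus x) = (b\oplus a)\oplus\gyr{b,a}{x} = \gyr{b,a}{x}$, and since $\gyr{b,a}{}$ is an automorphism, hence injective, so is $L_a$. As an immediate consequence, putting $a=0$ in (G3) and cancelling gives $\gyr{0,b}{} = \id{G}$ for all $b$.

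Next I would establish the two-sided structure, which I expect to be the main obstacle, since it must be bootstrapped out of the purely one-sided axioms with cancellation as the only lever. Through the usual chain of identities---showing first that a left inverse of a left inverse of $a$ returns $a$, then that a left inverse is automatically a right inverse, and finally that the left identity $0$ is also a right identity---one obtains (g1) and (g2). Now (g3a) is literally (G3) and (g4a) is literally (G4). For (g3) I would show the automorphism in (G3) is \emph{unique}: applying the cancellation identity $\ominus u\oplus(u\oplus y)=y$ (valid since $\gyr{\ominus u,u}{}=\id{G}$ by the loop property) with $u=a\oplus b$ to the left gyroassociative law recovers $\gyr{a,b}{c} = \ominus(a\oplus b)\oplus(a\oplus(b\oplus c))$, i.e.\ the gyrator identity, which simultaneously proves (g3) and pins the explicit formula.

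Finally, the right-handed laws follow from the \emph{gyration inversion law} $\gyr{a,b}{}^{-1} = \gyr{b,a}{}$, which I would derive from (g4a), the automorphism property, and cancellation. Granting it, (g3b) comes from substituting $\gyr{b,a}{c}$ for $c$ in (g3a) and collapsing $\gyr{a,b}{}\circ\gyr{b,a}{} = \id{G}$, while (g4b) comes from applying (g4a) to $\gyr{b,a}{}$ and inverting. Together with the converse this yields the stated equivalence.
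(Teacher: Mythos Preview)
The paper does not actually prove this theorem: it is stated with a citation to Foguel and Ungar \cite{TFAU2000} and no proof environment follows. So there is no in-paper argument to compare your proposal against.

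On its own merits, your plan is correct and follows the standard route found in the gyrogroup literature (e.g.\ Ungar's monograph). The converse is indeed trivial, and for the forward direction your sequence---left cancellation from injectivity of $L_a$, then $\gyr{0,b}{}=\id{G}$, then bootstrapping two-sided identity and inverse, then recovering the gyrator identity by applying $L_{\ominus(a\oplus b)}$ to (G3), and finally deducing (g3b) and (g4b) from the inversion law $\gyr{a,b}{}^{-1}=\gyr{b,a}{}$---is exactly how this is done. One small caution: the inversion law itself is the least trivial link in the chain, and ``from (g4a), the automorphism property, and cancellation'' understates the work; the clean way is to use $\gyr{a,b}{}=L_{a\oplus b}^{-1}\circ L_a\circ L_b$ together with the left Bol identity $L_a\circ L_b\circ L_a = L_{a\oplus(b\oplus a)}$ (which follows from (g3a) and (g4a)) to verify $\gyr{a,b}{}\circ\gyr{b,a}{}=\id{G}$. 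With that step filled in, your argument goes through.
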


\begin{definition}[{\hskip-0.5pt}\cite{AU2008}]
A gyrogroup $G$ having the additional property that
\begin{equation}\tag{gyrocommutative law}
a\oplus b = \gyr{a, b}({b\oplus a})
\end{equation} for all $a, b\in G$ is called a \emph{gyrocommutative gyrogroup}.
\end{definition}

\par The \emph{gyrogroup cooperation}, $\boxplus$, is defined by the
equation
\begin{equation}\label{Eqn: Cooperation}
a\boxplus b = a\oplus\gyr{a, \ominus b}{b}, \hskip0.5cm a, b\in G.
\end{equation}
\begin{theorem}[{\hskip-0.5pt}\cite{AU2008}]\label{Thm: Linear equations in gyrogroup}
Let $G$ be a gyrogroup and let $a, b\in G$. The unique solution of
the equation $a\oplus x = b$ in $G$ for the unknown $x$ is $x =
\ominus a\oplus b$, and the unique solution of the equation $x\oplus
a = b$ in $G$ for the unknown $x$ is $x = b\boxplus (\ominus a)$.
\end{theorem}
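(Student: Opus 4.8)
The statement packages two unique-solvability claims, one for $a\oplus x=b$ and one for $x\oplus a=b$, and I would prove each in the standard two-step form: first check that the displayed formula solves the equation (existence), then show it is the only solution (uniqueness). The whole argument rests on the cancellation properties of $\oplus$, together with the gyrogroup cooperation $\boxplus$ for the second equation. The foundational ingredient is the left cancellation law in its two companion forms $\ominus a\oplus(a\oplus b)=b$ and $a\oplus(\ominus a\oplus b)=b$; these are basic cancellation identities for a gyrogroup recorded in the references summarised in this section. At the level of mechanism each reduces to the normalization $\gyr{a,0}{}=\gyr{0,a}{}=\id{G}$: for instance $a\oplus(\ominus a\oplus b)=(a\oplus\ominus a)\oplus\gyr{a,\ominus a}{b}=\gyr{a,\ominus a}{b}$ by the left gyroassociative law (g3a), while $\gyr{a,\ominus a}{}=\gyr{a,\ominus a\oplus a}{}=\gyr{a,0}{}$ by the right loop property (g4b), so the claim is exactly that this residual gyroautomorphism is trivial.

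For the first equation I would argue as follows. Existence is the identity $a\oplus(\ominus a\oplus b)=b$, i.e. the element $x=\ominus a\oplus b$ is a solution. Uniqueness is then immediate: if $a\oplus x=b$, left-adding $\ominus a$ and using the companion cancellation law gives $x=\ominus a\oplus(a\oplus x)=\ominus a\oplus b$. Hence $\ominus a\oplus b$ is the unique solution, and no further machinery is needed here, since both halves are just the two forms of left cancellation read in opposite directions.

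The second equation is where the cooperation enters. Unwinding the definition \eqref{Eqn: Cooperation} and using $\ominus(\ominus a)=a$, the proposed solution is $b\boxplus(\ominus a)=b\oplus\gyr{b,a}{\ominus a}$. For existence I would verify $(b\boxplus(\ominus a))\oplus a=b$ by applying the right gyroassociative law (g3b) to push the trailing $a$ inside the sum, and then collapsing the resulting composite gyroautomorphism with the loop properties (g4a), (g4b) and the left cancellation law; this is precisely the right cancellation law $(b\boxplus c)\oplus(\ominus c)=b$ evaluated at $c=\ominus a$. For uniqueness I would run the dual identity $(b\oplus a)\boxplus(\ominus a)=b$: starting from any solution $x\oplus a=b$ and applying $\boxplus(\ominus a)$ on the right returns $x$ on the left and $b\boxplus(\ominus a)$ on the right.

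I expect the main obstacle to be exactly this second equation, and specifically the establishment of the two right cancellation laws for $\boxplus$. Unlike the first equation, the verification does not collapse under a single application of associativity; one must track a composite gyration $\gyr{\gyr{b,a}{\ominus a},\,b}{}$ arising from (g3b) and show, via the loop properties and the gyrator identity, that it simplifies so that the trailing term cancels cleanly. The left cancellation law underpinning everything, equivalently the triviality of $\gyr{a,0}{}$, is itself a non-formal foundational fact about gyrogroups rather than a one-line consequence of the listed identities, and I would cite it from the basic theory rather than reprove it.
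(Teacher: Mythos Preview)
The paper does not supply its own proof of this theorem; it is quoted from \cite{AU2008} and stated without argument, so there is no in-paper proof to compare against. Your outline is essentially the standard proof found in that reference: verify the displayed formula is a solution via the appropriate cancellation identity, then derive uniqueness by applying the companion cancellation law.

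One point of care concerns logical ordering. In this paper the cancellation laws are presented \emph{after} the present theorem and are explicitly introduced as consequences of it (``By Theorem~\ref{Thm: Linear equations in gyrogroup}, the following cancellation laws hold\dots''). Thus when you say the left cancellation identities are ``recorded in the references summarised in this section'' and propose to cite them, that would be circular relative to the paper's development. You do anticipate this by sketching the reduction to $\gyr{a,0}{}=\gyr{0,a}{}=\id{G}$, and that is the right move; just note that establishing this triviality itself requires first knowing that left translations $L_a$ are injective, which one gets directly from the axioms via $L_{\ominus a}\circ L_a=\gyr{\ominus a,a}{}\in\aut{G}$. With that in hand your existence and uniqueness arguments for both equations go through, and the right-cancellation computation for $\boxplus$ is exactly the one carried out in \cite{AU2008}.
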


\par By Theorem \ref{Thm: Linear equations in gyrogroup}, the following cancellation laws hold in
gyrogroups.
\begin{theorem}[{\hskip-0.5pt}\cite{AU2008}]
Let $G$ be a gyrogroup. For all $a, b, c\in G$,
\begin{enumerate}
    \item $a\oplus b = a\oplus c$ implies $b = c$;\hfill\emph{(general left cancellation
    law)}
    \item $\ominus a\oplus(a\oplus b) = b$;\hfill\emph{(left cancellation law)}
    \item $(b\ominus a)\boxplus a = b$;\hfill\emph{(right cancellation law I)}
    \item $(b\boxplus (\ominus a))\oplus a = b$. \hfill\emph{(right cancellation law II)}
\end{enumerate}
\end{theorem}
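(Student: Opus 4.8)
The plan is to derive all four cancellation laws from Theorem \ref{Thm: Linear equations in gyrogroup}, exploiting the fact that a unique solution for every right-hand side means the relevant translation map is a bijection, so that uniqueness can be read in both directions.

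For (1), I would observe that if $a\oplus b = a\oplus c$, then $b$ and $c$ are both solutions of the equation $a\oplus x = a\oplus b$ for the unknown $x$. Since Theorem \ref{Thm: Linear equations in gyrogroup} guarantees this equation has a unique solution, $b = c$ follows at once. For (2), the same equation $a\oplus x = a\oplus b$ visibly admits $x = b$ as a solution, while the theorem identifies its unique solution as $x = \ominus a\oplus(a\oplus b)$; equating the two gives $\ominus a\oplus(a\oplus b) = b$. For (4), I would feed the solution back into its own equation: by the theorem the unique solution of $x\oplus a = b$ is $x = b\boxplus(\ominus a)$, and substituting this value of $x$ into $x\oplus a = b$ yields $(b\boxplus(\ominus a))\oplus a = b$ directly.

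Part (3) is the one requiring more care, since it is not simply a matter of reading off a solution. Here I would use the reverse (injectivity) direction of the right-hand linear equation. Concretely, fixing an arbitrary $x$ and setting $b = x\oplus a$, the equation $y\oplus a = b$ has $x$ as a solution and, by the theorem, unique solution $y = b\boxplus(\ominus a) = (x\oplus a)\boxplus(\ominus a)$; uniqueness then forces $(x\oplus a)\boxplus(\ominus a) = x$ for all $x, a$. Replacing $a$ by $\ominus a$ and using $\ominus(\ominus a) = a$ (itself a consequence of the uniqueness of two-sided inverses) converts this into $(x\ominus a)\boxplus a = x$, which is exactly (3) after renaming $x$ as $b$.

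The main obstacle is thus recognizing that (3), unlike (4), is not a plug-in identity but rather encodes the statement that $\boxplus a$ is the two-sided inverse of the map $x\mapsto x\oplus(\ominus a)$; establishing it requires invoking uniqueness in the injectivity direction together with the inverse substitution $a\mapsto\ominus a$, whereas (1), (2), and (4) follow immediately by substituting known solutions into their defining equations.
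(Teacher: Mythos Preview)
Your proposal is correct and follows exactly the route the paper indicates: the paper does not spell out a proof but simply states that these cancellation laws follow ``By Theorem~\ref{Thm: Linear equations in gyrogroup}'', and you have carefully filled in precisely those details, including the extra substitution $a\mapsto\ominus a$ needed for~(3).
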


\par Let $G$ be a gyrogroup. For $a\in G$, the \textit{left
gyrotranslation by $a$}, \mbox{$L_a\colon x\mapsto a\oplus x$}, and
the \textit{right gyrotranslation by $a$}, $R_a\colon x\mapsto
x\oplus a$, are permutations of $G$. Further, one has the following
composition law
\begin{equation}\label{Eqn: Gyroauto and left-right translation}
L_a\circ L_b = L_{a \oplus b}\circ\gyr{a, b}{}.
\end{equation}
From this it can be proved that every gyrogroup forms a left Bol
loop with the $\mathrm{A}_\ell$-property, where the
gyroautomorphisms correspond to \textit{left inner mappings} or
\textit{precession maps}. In fact, gyrogroups and left Bol loops
with the $\mathrm{A}_\ell$-property are equivalent, see for instance
\cite{LS1998AM}.

\section{Subgyrogroups}
\par Let $G$ be a gyrogroup. A nonempty subset $H$ of $G$
is called a \emph{subgyrogroup} if it is a gyrogroup under the
operation inherited from $G$ and the restriction of $\gyr{a, b}{}$
to $H$ becomes an automorphism of $H$ for all $a, b\in H$. If $H$ is
a subgyrogroup of $G$, we write $H\leqslant G$.  We have the
following subgyrogroup criterion, as in the group case.
\begin{proposition}[{\hskip-0.5pt}\cite{TSKW2014}]\label{Pro: Equivalent condition for subgyrogroup}
A nonempty subset $H$ of $G$ is a subgyrogroup if and only if
\emph{(1)} $a\in H$ implies $\ominus a \in H$ and \emph{(2)} $a,
b\in H$ implies $a\oplus b\in H$.
\end{proposition}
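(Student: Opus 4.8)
The statement is a biconditional whose forward implication is routine and whose converse carries the real content. Throughout I would work with the operation inherited from $G$ and lean on the gyrator identity from Theorem \ref{Thm: Alternative def. of gyrogroup} together with the cancellation laws and the uniqueness of solutions in Theorem \ref{Thm: Linear equations in gyrogroup}.

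For the forward direction, suppose $H\leqslant G$. Closure, condition (2), is immediate since $H$ is a gyrogroup under the inherited operation. For condition (1), I would first identify the identity $0_H$ of $H$ with the identity $0$ of $G$: for any $a\in H$ one has $0_H\oplus a = a = 0\oplus a$, so $0_H = 0$ by the uniqueness part of Theorem \ref{Thm: Linear equations in gyrogroup}. The left inverse $a'$ of $a$ in $H$ then satisfies $a'\oplus a = 0 = \ominus a\oplus a$, whence $a' = \ominus a\in H$ by the same uniqueness, giving (1).

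For the converse, assume (1) and (2). As $H$ is nonempty, pick $a\in H$; then $\ominus a\in H$ by (1) and $0 = \ominus a\oplus a\in H$ by (2), so $H$ contains the identity, and the left identity axiom (G1) and left inverse axiom (G2) for $H$ are inherited from $G$. The crucial point is that for $a,b\in H$ the restriction $\res{\gyr{a,b}{}}{H}$ is an automorphism of $H$. That $\gyr{a,b}{}$ maps $H$ into $H$ follows directly from the gyrator identity $\gyr{a,b}{c} = \ominus(a\oplus b)\oplus(a\oplus(b\oplus c))$ by repeated application of (1) and (2), since every nested term stays in $H$. Because $\gyr{a,b}{}$ is already an injective homomorphism of $G$, its restriction is an injective homomorphism of $H$.

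The main obstacle is surjectivity of $\res{\gyr{a,b}{}}{H}$ onto $H$, since $H$ need not be finite and one cannot invoke a counting argument. Given $h\in H$, I would solve $\gyr{a,b}{c} = h$ for $c$ by unwinding the gyrator identity with the cancellation laws: these successively yield $a\oplus(b\oplus c) = (a\oplus b)\oplus h$, then $b\oplus c = \ominus a\oplus((a\oplus b)\oplus h)$, and finally $c = \ominus b\oplus(\ominus a\oplus((a\oplus b)\oplus h))$. Every element built up in this last expression lies in $H$ by (1) and (2), so the unique preimage $c$ belongs to $H$. Hence $\res{\gyr{a,b}{}}{H}$ is a bijection of $H$, and therefore an automorphism of $H$. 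With the gyrations restricting correctly, the left gyroassociative law (G3) and the left loop property (G4) for $H$ are inherited verbatim from their validity in $G$, which completes the verification that $H$ is a subgyrogroup.
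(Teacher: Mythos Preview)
Your argument is correct. The paper does not supply its own proof of this proposition; it is stated with a citation to \cite{TSKW2014} and left unproved here, so there is nothing to compare against beyond noting that your write-up is self-contained and sound. In particular, your handling of the only nontrivial point---surjectivity of $\res{\gyr{a,b}{}}{H}$ without assuming finiteness---is clean: unwinding the gyrator identity via left cancellation to obtain $c = \ominus b\oplus(\ominus a\oplus((a\oplus b)\oplus h))\in H$ is exactly what is needed, and the forward direction is handled correctly via the uniqueness in Theorem~\ref{Thm: Linear equations in gyrogroup}.
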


\par Subgyrogroups that arise as groups under gyrogroup operation are of great importance in the study of gyrogroups.
\begin{definition}[{\hskip-0.5pt}\cite{TFAU2000}]\label{Def: Subgroup of gyrogroup}
A nonempty subset $X$ of a gyrogroup $(G,\oplus)$ is a
\textit{subgroup} if it is a group under the restriction of $\oplus$
to $X$.
\end{definition}

\par The following proposition shows that any subgroup of a gyrogroup is
simply a subgyrogroup with trivial gyroautomorphisms.
\begin{proposition}\label{Prop: Characterization of subgroup}
A nonempty subset $X$ of a gyrogroup $G$ is a subgroup if and only
if it is a subgyrogroup of $G$ and $\res{\gyr{a, b}{}}{X} = \id{X}$
for all $a, b\in X$.
\end{proposition}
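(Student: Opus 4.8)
The plan is to prove the two implications separately, drawing on the subgyrogroup criterion of Proposition \ref{Pro: Equivalent condition for subgyrogroup} together with the left gyroassociative law and the gyrator identity recorded in Theorem \ref{Thm: Alternative def. of gyrogroup}.

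For the implication ($\Leftarrow$), suppose $X$ is a subgyrogroup with $\res{\gyr{a, b}{}}{X} = \id{X}$ for all $a, b\in X$. Being a subgyrogroup, $X$ is itself a gyrogroup under the inherited operation, so by Theorem \ref{Thm: Alternative def. of gyrogroup} it possesses a two-sided identity and two-sided inverses. It then remains only to verify associativity. For $a, b, c\in X$ the left gyroassociative law reads $a\oplus (b\oplus c) = (a\oplus b)\oplus\gyr{a, b}{c}$, and since $\gyr{a, b}{}$ restricts to the identity on $X$ we have $\gyr{a, b}{c} = c$, whence $a\oplus (b\oplus c) = (a\oplus b)\oplus c$. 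Thus $\oplus$ is associative on $X$, and $X$ is a group, i.e.\ a subgroup in the sense of Definition \ref{Def: Subgroup of gyrogroup}.

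For the implication ($\Rightarrow$), suppose $X$ is a subgroup. I would first show $X$ is a subgyrogroup by checking the two conditions of Proposition \ref{Pro: Equivalent condition for subgyrogroup}. Closure under $\oplus$ is immediate. For closure under $\ominus$, the key preliminary step---which I expect to be the main obstacle---is to identify the group-theoretic identity and inverses of $X$ with those of the ambient gyrogroup $G$, since \emph{a priori} the group structure on $X$ need not refer to $0$ or to $\ominus$. The group identity $e$ of $X$ is idempotent, so $e\oplus e = e = e\oplus 0$, and the general left cancellation law forces $e = 0$; consequently the group inverse of $a\in X$ satisfies $a\oplus a^{-1} = a^{-1}\oplus a = 0$, and uniqueness of the two-sided inverse in $G$ gives $a^{-1} = \ominus a\in X$. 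Hence $X$ is closed under $\ominus$ and is a subgyrogroup.

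Finally, to establish triviality of the gyroautomorphisms on $X$, I would apply the gyrator identity: for $a, b, c\in X$,
\[
\gyr{a, b}{c} = \ominus(a\oplus b)\oplus(a\oplus (b\oplus c)).
\]
Using associativity of $\oplus$ on the group $X$ to rewrite $a\oplus (b\oplus c) = (a\oplus b)\oplus c$, and then invoking the left cancellation law, the right-hand side collapses to $c$. Thus $\gyr{a, b}{c} = c$ for every $c\in X$, that is, $\res{\gyr{a, b}{}}{X} = \id{X}$, which would complete the proof.
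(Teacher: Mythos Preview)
The paper states this proposition without proof, so there is no argument to compare against. Your proof is correct and complete: both implications are handled carefully, and you rightly flag the one nontrivial point in the $(\Rightarrow)$ direction---namely, that the abstract group identity and inverses in $X$ must be shown to coincide with $0$ and $\ominus$ in $G$ before Proposition~\ref{Pro: Equivalent condition for subgyrogroup} can be invoked. Your idempotent argument via left cancellation handles this cleanly, and the use of the gyrator identity together with associativity on $X$ to collapse $\gyr{a,b}{c}$ to $c$ is exactly the right move.
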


\par Just as in group theory, we obtain the following results.
\begin{proposition}\label{Prop: Intersection of subgyrogroup is a subgyrogroup}
Let $G$ be a gyrogroup and let $\cols{H}$ be a nonempty collection
of subgyrogroups of $G$. Then the intersection
$\lcap{H\in\cols{H}}{} H$ forms a subgyrogroup of $G$.
\end{proposition}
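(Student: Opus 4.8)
The plan is to reduce everything to the subgyrogroup criterion of Proposition \ref{Pro: Equivalent condition for subgyrogroup}, which says that a nonempty subset is a subgyrogroup precisely when it is closed under the operation $\oplus$ and under taking inverses. Write $K = \lcap{H\in\cols{H}}{} H$ for the intersection. There are three things to check: that $K$ is nonempty, that $K$ is closed under inverses, and that $K$ is closed under $\oplus$.

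First I would establish that $K$ is nonempty by showing that the identity $0$ lies in every member of $\cols{H}$. Fix any $H\in\cols{H}$. Since $H$ is a subgyrogroup, it is in particular nonempty, so choose some $a\in H$. By closure under inverses we have $\ominus a\in H$, and then by closure under $\oplus$ together with the left inverse relation $\ominus a\oplus a = 0$ (available since $H$ is itself a gyrogroup, so $0\in H$), we conclude $0\in H$. As this holds for every $H\in\cols{H}$, we get $0\in K$, so $K\neq\nul$.

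Next I would verify the two closure conditions, both of which follow immediately from the membership characterization of an intersection. If $a\in K$, then $a\in H$ for every $H\in\cols{H}$, hence $\ominus a\in H$ for every such $H$ because each $H$ is a subgyrogroup; therefore $\ominus a\in K$. Similarly, if $a, b\in K$, then $a, b\in H$ for every $H\in\cols{H}$, so $a\oplus b\in H$ for every such $H$, giving $a\oplus b\in K$. Having checked that $K$ is nonempty and satisfies conditions (1) and (2), Proposition \ref{Pro: Equivalent condition for subgyrogroup} yields that $K$ is a subgyrogroup of $G$, completing the argument.

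This proof is entirely routine, so I do not anticipate a genuine obstacle; the only point requiring a moment's care is the nonemptiness step, where one must invoke that each subgyrogroup already contains $0$ rather than taking it for granted, since a priori the members of $\cols{H}$ could be distinct subgyrogroups. Everything else is a direct transcription of the standard group-theoretic argument into the gyrogroup setting via the subgyrogroup criterion.
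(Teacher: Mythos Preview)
Your proof is correct and follows exactly the approach indicated in the paper, which simply states that the result follows directly from the subgyrogroup criterion (Proposition~\ref{Pro: Equivalent condition for subgyrogroup}); you have merely spelled out the routine verification that the paper leaves implicit.
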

\begin{proof}
This follows directly from the subgyrogroup criterion.
\end{proof}
\begin{proposition}\label{Prop: Smallest subgyrogroup containing a set}
Let $A$ be a nonempty subset of a gyrogroup $G$. There exists a
unique subgyrogroup of $G$, denoted by $\gen{A}$, such that
\begin{enumerate}
\item $A\subseteq\gen{A}$ and
\item if $H\leqslant G$ and $A\subseteq H$, then $\gen{A}\subseteq H$.
\end{enumerate}
\end{proposition}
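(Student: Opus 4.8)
The plan is to imitate the standard group-theoretic construction of the subobject generated by a set, leaning on the intersection result already established. First I would let $\cols{H}$ denote the collection of all subgyrogroups of $G$ that contain $A$. This collection is nonempty: since $G$ is a subgyrogroup of itself and $A\subseteq G$, we have $G\in\cols{H}$. I would then \emph{define}
\[
\gen{A} = \lcap{H\in\cols{H}}{} H,
\]
and invoke Proposition \ref{Prop: Intersection of subgyrogroup is a subgyrogroup} to conclude that $\gen{A}$, being the intersection of a nonempty collection of subgyrogroups of $G$, is itself a subgyrogroup of $G$.

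Next I would verify the two defining properties. For (1), each $H\in\cols{H}$ satisfies $A\subseteq H$ by construction, so $A$ is contained in their common intersection $\gen{A}$. For (2), suppose $H\leqslant G$ and $A\subseteq H$; then $H\in\cols{H}$ by definition, and hence $\gen{A}\subseteq H$ follows at once from the description of $\gen{A}$ as the intersection taken over all members of $\cols{H}$.

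Finally I would settle uniqueness using properties (1) and (2) alone, independently of the explicit construction. Suppose $K$ is any subgyrogroup of $G$ enjoying both (1) and (2). Applying property (2) of $\gen{A}$ to the subgyrogroup $K$, which contains $A$ by its own property (1), yields $\gen{A}\subseteq K$; symmetrically, applying property (2) of $K$ to the subgyrogroup $\gen{A}$, which contains $A$ by its own property (1), yields $K\subseteq\gen{A}$. Therefore $K=\gen{A}$. I do not expect any genuine obstacle in this argument: the only point requiring care is ensuring that $\cols{H}$ is nonempty so that Proposition \ref{Prop: Intersection of subgyrogroup is a subgyrogroup} may be applied, and this is guaranteed by regarding $G$ as a subgyrogroup of itself.
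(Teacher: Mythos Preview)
Your proposal is correct and follows essentially the same approach as the paper: define $\gen{A}$ as the intersection of all subgyrogroups of $G$ containing $A$, invoke Proposition~\ref{Prop: Intersection of subgyrogroup is a subgyrogroup}, and read off properties (1), (2), and uniqueness. You have simply made explicit the details (nonemptiness of $\cols{H}$, the verification of each condition, and the mutual-inclusion argument for uniqueness) that the paper leaves to the reader.
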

\begin{proof}
Set $\cols{H} = \cset{H}{H\leqslant G\textrm{ and } A\subseteq H}$.
Then $\gen{A} := \lcap{H\in\cols{H}}{}H$ is a subgyrogroup of $G$
satisfying the two conditions. The uniqueness follows from condition
(2).
\end{proof}

\par The subgyrogroup generated by one-element set $\set{a}$ is called
the \textit{cyclic subgyro-group generated by $a$}, which will be
denoted by $\gen{a}$. Next, we will give an explicit description of
$\gen{a}$.

\par Let $G$ be a gyrogroup and let $a\in G$. Define recursively the following notation:
\begin{equation}
0\cdot a = 0,\hskip0.3cm m\cdot a = a\oplus ((m-1)\cdot a),\, m \geq
1,\hskip0.3cm m\cdot a = (-m)\cdot(\ominus a),\, m < 0.
\end{equation}
We also define the right counterparts:
\begin{equation}
a\cdot 0 = 0,\hskip0.3cm a\cdot m = (a\cdot (m-1))\oplus a,\, m \geq
1,\hskip0.3cm a\cdot m = (\ominus a)\cdot(-m),\, m < 0.
\end{equation}

\begin{lemma}\label{Lem: gyr[a, a. m] = gyr[a.m, a] = gyr[a, m. a] = gyr[m.a, a] = id}
Let $G$ be a gyrogroup. For any element $a$ of $G$, $$\gyr{a\cdot m,
a}{} = \gyr{m\cdot a, a}{} = \gyr{a, m\cdot a}{} = \gyr{a, a\cdot
m}{} = \id{G}$$ for all $m\in\Z$.
\end{lemma}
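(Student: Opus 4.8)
The plan is to reduce all four identities to the two in which $a$ appears together with a \emph{right} multiple $a\cdot m$, since those mesh perfectly with the loop properties, and then to bridge the remaining two ``left multiple'' cases by proving that the left and right multiples coincide. First I would record the base facts $\gyr{0,a}{} = \gyr{a,0}{} = \id{G}$, which drop straight out of the gyrator identity and the left cancellation law: for instance $\gyr{0,a}{c} = \ominus(0\oplus a)\oplus(0\oplus(a\oplus c)) = \ominus a\oplus(a\oplus c) = c$, and symmetrically for $\gyr{a,0}{}$.

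Next I would dispatch the two right-multiple identities $\gyr{a\cdot m, a}{} = \id{G}$ and $\gyr{a, a\cdot m}{} = \id{G}$ by induction on $m\ge 0$, exploiting that the defining recursion $a\cdot m = (a\cdot(m-1))\oplus a$ grows $a$ on exactly the side the loop properties want. By the left loop property (g4a), $\gyr{a\cdot m, a}{} = \gyr{(a\cdot(m-1))\oplus a, a}{} = \gyr{a\cdot(m-1), a}{}$, and by the right loop property (g4b), $\gyr{a, a\cdot m}{} = \gyr{a, (a\cdot(m-1))\oplus a}{} = \gyr{a, a\cdot(m-1)}{}$; each descends to the base case above. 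In particular $\gyr{a,a}{} = \id{G}$ appears for free as the case $m=1$. I would note that these hold for \emph{every} element of $G$, hence also for $\ominus a$.

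The crux is the two left-multiple identities $\gyr{m\cdot a, a}{}$ and $\gyr{a, m\cdot a}{}$, where the recursion $m\cdot a = a\oplus((m-1)\cdot a)$ grows $a$ on the ``wrong'' side and the loop properties no longer apply directly. I would overcome this by proving $m\cdot a = a\cdot m$ for all $m\ge 0$ by strong induction. Assuming $k\cdot a = a\cdot k$ for $k\le m$, the right gyroassociative law (g3b) gives $(m\cdot a)\oplus a = (a\oplus((m-1)\cdot a))\oplus a = a\oplus(((m-1)\cdot a)\oplus\gyr{(m-1)\cdot a, a}{a})$; the inductive hypothesis rewrites $\gyr{(m-1)\cdot a, a}{}$ as $\gyr{a\cdot(m-1), a}{} = \id{G}$ by the right-multiple case just proved, and then $((m-1)\cdot a)\oplus a = (a\cdot(m-1))\oplus a = a\cdot m = m\cdot a$, so that $a\cdot(m+1) = (m\cdot a)\oplus a = a\oplus(m\cdot a) = (m+1)\cdot a$. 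With $m\cdot a = a\cdot m$ in hand, the two left-multiple gyrations collapse onto the right-multiple ones and are therefore $\id{G}$. This step, where the asymmetry between the recursion and the loop properties has to be absorbed, is the one obstacle I expect; everything else is bookkeeping.

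Finally I would extend to $m<0$. The clean route is to first check that both defining recursions persist at negative indices; this is precisely where one feeds in the nonnegative identities for $\ominus a$ (for example $a\cdot m = (a\cdot(m-1))\oplus a$ at $m\le 0$ rewrites, via $a\cdot m = (\ominus a)\cdot(-m)$ and g3b, into a statement governed by $\gyr{\ominus a, (\ominus a)\cdot j}{} = \id{G}$). Once the recursions hold for all $m\in\Z$, the inductions of the preceding two paragraphs run two-sidedly from $m=0$, yielding $\gyr{a\cdot m, a}{} = \gyr{a, a\cdot m}{} = \id{G}$ for every $m\in\Z$ with $a$ kept fixed in its slot, while for $m<0$ the equality $m\cdot a = a\cdot m$ follows at once from the nonnegative case applied to $\ominus a$, since $m\cdot a = (-m)\cdot(\ominus a) = (\ominus a)\cdot(-m) = a\cdot m$. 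Combining these gives all four identities for every integer $m$.
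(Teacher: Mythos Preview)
Your proposal is correct and follows essentially the same route as the paper's proof: induction via the loop properties for $\gyr{a,a\cdot m}{}$ and $\gyr{a\cdot m,a}{}$ when $m\ge 0$, the right gyroassociative law to obtain $a\cdot m = m\cdot a$, and a separate argument for $m<0$. The paper's proof is simply a terse one-paragraph sketch of what you have spelled out in full; your write-up is more explicit about how the negative case is handled (pushing the recursions to negative indices and invoking the nonnegative result for $\ominus a$), but the underlying mechanism is the same.
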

\begin{proof}
By induction, $\gyr{a, a\cdot m}{} = \id{G}$ and $\gyr{a\cdot m,
a}{} = \id{G}$ for all $a\in G$ and all $m\geq 0$. By the right
gyroassociative law, $a\cdot m = m\cdot a$ for all $m\in\Z$. If $m <
0$, the left and right loop properties and the left cancellation law
together imply $\gyr{a, a\cdot m}{} = \id{G}$.
\end{proof}

\par By induction,
\begin{equation}\label{Eqn: m.a + k.a = (m+k).a, nonnegative case}
(m\cdot a) \oplus(k\cdot a) = (m+k)\cdot a
\end{equation}
for all $m, k\geq 0$. In fact, we have the following proposition.
\begin{proposition}\label{Prop: m.a + k.a = (m+k).a, integer case}
Let $a$ be an element of a gyrogroup. For all $m, k\in\Z$,
$$(m\cdot a) \oplus(k\cdot a) = (m+k)\cdot a.$$
\end{proposition}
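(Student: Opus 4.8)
The plan is to deduce the integer case from the nonnegative case \eqref{Eqn: m.a + k.a = (m+k).a, nonnegative case} by a pair of inductions: the first handling $k\geq 0$ and the second handling $k\leq 0$, in each case with the index $m$ ranging over \emph{all} of $\Z$ (so that the hypothesis may be re-instantiated at a shifted first index). The two ingredients that make each inductive step go through are the triviality of the relevant gyrations, supplied by Lemma~\ref{Lem: gyr[a, a. m] = gyr[a.m, a] = gyr[a, m. a] = gyr[m.a, a] = id}, and two auxiliary increment laws, namely $(m\cdot a)\oplus a = (m+1)\cdot a$ and $(m\cdot a)\oplus(\ominus a) = (m-1)\cdot a$, valid for every $m\in\Z$.

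First I would record these two increment laws. For $m\geq 0$ the first is immediate from the right-hand recursion defining $a\cdot m$ together with the identity $a\cdot m = m\cdot a$ established in the proof of Lemma~\ref{Lem: gyr[a, a. m] = gyr[a.m, a] = gyr[a, m. a] = gyr[m.a, a] = id}. For $m<0$, writing $m=-n$ with $n\geq 1$ and using $(-n)\cdot a = n\cdot(\ominus a)$, it reduces to the claim $(n\cdot(\ominus a))\oplus a = (n-1)\cdot(\ominus a)$, which I would prove by a short induction on $n$: the base case is $(\ominus a)\oplus a = 0$, and the inductive step unfolds $(n+1)\cdot(\ominus a)$ on the left, applies the right gyroassociative law, and kills the gyration $\gyr{n\cdot(\ominus a), \ominus a}{}$ using Lemma~\ref{Lem: gyr[a, a. m] = gyr[a.m, a] = gyr[a, m. a] = gyr[m.a, a] = id} with base point $\ominus a$. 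The second increment law is then simply the first applied to $\ominus a$ and rewritten through $j\cdot(\ominus a) = (-j)\cdot a$.

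With these in hand I would prove the proposition. For the statement ``$(m\cdot a)\oplus(k\cdot a)=(m+k)\cdot a$ for all $m\in\Z$'' I would induct on $k\geq 0$: the step rewrites $(k+1)\cdot a = a\oplus(k\cdot a)$, applies the left gyroassociative law to split off the leading $a$, uses $\gyr{m\cdot a, a}{}=\id{G}$, rewrites $(m\cdot a)\oplus a = (m+1)\cdot a$ by the first increment law, and closes with the induction hypothesis applied at $m+1$ (legitimate precisely because the hypothesis is quantified over all $m\in\Z$). The case $k\leq 0$ is handled by an entirely parallel downward induction on $-k$: here one unfolds $(-(k'+1))\cdot a = (\ominus a)\oplus((-k')\cdot a)$, and the crucial point is that the gyration produced by the left gyroassociative law is $\gyr{m\cdot a, \ominus a}{}$, which is again the identity once one observes that $m\cdot a = (-m)\cdot(\ominus a)$ is a multiple of $\ominus a$, so that Lemma~\ref{Lem: gyr[a, a. m] = gyr[a.m, a] = gyr[a, m. a] = gyr[m.a, a] = id} applies with base point $\ominus a$.

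The main obstacle is the increment laws in the negative range; everything afterward is bookkeeping once the gyrations are known to be trivial. The decisive observation throughout is that a negative multiple of $a$ is a positive multiple of $\ominus a$: this both lets the recursive definitions be unfolded in the induction and lets Lemma~\ref{Lem: gyr[a, a. m] = gyr[a.m, a] = gyr[a, m. a] = gyr[m.a, a] = id} be invoked with base point $\ominus a$ to trivialize the gyrations $\gyr{m\cdot a, \ominus a}{}$ that would otherwise obstruct the argument.
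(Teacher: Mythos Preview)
Your proof is correct. The paper's own proof is a single sentence (``routine, using \eqref{Eqn: m.a + k.a = (m+k).a, nonnegative case} and induction''), so your argument is a valid and carefully worked-out implementation of exactly the kind of induction the paper leaves to the reader; the increment laws and the systematic use of Lemma~\ref{Lem: gyr[a, a. m] = gyr[a.m, a] = gyr[a, m. a] = gyr[m.a, a] = id} (at base points $a$ and $\ominus a$) are precisely the expected ingredients.
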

\begin{proof}
The proof is routine, using (\ref{Eqn: m.a + k.a = (m+k).a,
nonnegative case}) and induction.
\end{proof}

\begin{theorem}\label{Thm: Explicit description of <a>}
Let $G$ be a gyrogroup and let $a\in G$. Then $\gen{a} =
\cset{m\cdot a}{m\in\Z}$. In particular, $\gen{a}$ forms a subgroup
of $G$.
\end{theorem}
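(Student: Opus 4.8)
The plan is to set $S = \cset{m\cdot a}{m\in\Z}$ and prove the statement in three moves: first that $S$ is a subgyrogroup, then that $S$ is in fact a subgroup, and finally that $S = \gen{a}$. To see that $S$ is a subgyrogroup I would invoke the criterion of Proposition \ref{Pro: Equivalent condition for subgyrogroup}. The set is nonempty since $0 = 0\cdot a\in S$. Closure under $\oplus$ is immediate from Proposition \ref{Prop: m.a + k.a = (m+k).a, integer case}, because $(m\cdot a)\oplus(k\cdot a) = (m+k)\cdot a\in S$. For closure under inversion, the same proposition gives $(m\cdot a)\oplus((-m)\cdot a) = 0\cdot a = 0$, so $(-m)\cdot a$ solves the equation $(m\cdot a)\oplus x = 0$; by the uniqueness in Theorem \ref{Thm: Linear equations in gyrogroup} this forces $(-m)\cdot a = \ominus(m\cdot a)\in S$.

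Next I would upgrade ``subgyrogroup'' to ``subgroup''. The cleanest route is to consider the map $\phi\colon\Z\to S$ defined by $\phi(m) = m\cdot a$, which is surjective by the definition of $S$ and satisfies $\phi(m)\oplus\phi(k) = \phi(m+k)$ by Proposition \ref{Prop: m.a + k.a = (m+k).a, integer case}. Transporting the group structure of $(\Z,+)$ along $\phi$ then shows $(S,\oplus)$ is a group: associativity holds because $((m\cdot a)\oplus(k\cdot a))\oplus(n\cdot a)$ and $(m\cdot a)\oplus((k\cdot a)\oplus(n\cdot a))$ both collapse to $(m+k+n)\cdot a$; the identity is $\phi(0) = 0$; and the inverse of $\phi(m)$ is $\phi(-m)$. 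Hence $S$ is a subgroup in the sense of Definition \ref{Def: Subgroup of gyrogroup}. (Equivalently one could appeal to Proposition \ref{Prop: Characterization of subgroup} after checking that the gyrations restricted to $S$ are trivial, but deducing associativity directly from Proposition \ref{Prop: m.a + k.a = (m+k).a, integer case} is shorter.)

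Finally I would identify $S$ with $\gen{a}$. Since $a = 1\cdot a\in S$ and $S$ is a subgyrogroup, the minimality clause (2) in Proposition \ref{Prop: Smallest subgyrogroup containing a set} yields $\gen{a}\subseteq S$. For the reverse inclusion, note that $\gen{a}$ is a subgyrogroup containing $a$, hence contains $0$ and is closed under $\oplus$ and $\ominus$; an induction on $m\geq 0$ using $m\cdot a = a\oplus((m-1)\cdot a)$ places every $m\cdot a$ with $m\geq 0$ in $\gen{a}$, and the case $m < 0$ follows from $m\cdot a = (-m)\cdot(\ominus a)$ together with $\ominus a\in\gen{a}$. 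Thus $S\subseteq\gen{a}$, so $\gen{a} = S$, and since $S$ is a subgroup so is $\gen{a}$.

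The only genuine subtlety is the middle step, namely confirming that $(S,\oplus)$ is an honest group rather than merely a subgyrogroup with possibly nontrivial gyrations. This is exactly where Proposition \ref{Prop: m.a + k.a = (m+k).a, integer case} does the real work: by collapsing every product of integer multiples of $a$ into a single multiple, it forces associativity (and commutativity) on $S$, which is the group-theoretic content hidden behind the phrase ``in particular, $\gen{a}$ forms a subgroup''.
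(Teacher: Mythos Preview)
Your proposal is correct and follows essentially the same route as the paper: both set $H=\cset{m\cdot a}{m\in\Z}$, use Proposition \ref{Prop: m.a + k.a = (m+k).a, integer case} for closure and inversion, invoke minimality of $\gen{a}$ for one inclusion and closure of $\gen{a}$ for the other, and establish the subgroup claim via the associativity computation $(m\cdot a)\oplus[(n\cdot a)\oplus(k\cdot a)]=(m+n+k)\cdot a=[(m\cdot a)\oplus(n\cdot a)]\oplus(k\cdot a)$. The only cosmetic difference is that the paper packages this last step as ``$\res{\gyr{m\cdot a,n\cdot a}{}}{\gen{a}}=\id{\gen{a}}$'' and then cites Proposition \ref{Prop: Characterization of subgroup}, which is exactly the alternative you mention parenthetically.
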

\begin{proof}
Set $H = \cset{m\cdot a}{m\in\Z}$. For all $m, n\in\Z$, Proposition
\ref{Prop: m.a + k.a = (m+k).a, integer case} implies that
$\ominus(m\cdot a) = (-m)\cdot a\in H$ and $(m\cdot a)\oplus(k\cdot
a) = (m+k)\cdot a\in H$. This proves $H\leqslant G$. Since $a\in H$,
we have $\gen{a}\subseteq H$ by the minimality of $\gen{a}$. By the
closure property of subgyrogroups, $H\subseteq \gen{a}$ and so
equality holds.
\par Note that $(m\cdot a)\oplus [(n\cdot a)\oplus (k\cdot a)] =
(m+n+k)\cdot a = [(m\cdot a)\oplus(n\cdot a)]\oplus(k\cdot a)$ for
all $m, n ,k\in\Z$. Thus, $\res{\gyr{m\cdot a, n\cdot a}{}}{\gen{a}}
= \id{\gen{a}}$ for all $m, n\in\Z$ and hence $\gen{a}$ forms a
subgroup of $G$ by Proposition \ref{Prop: Characterization of
subgroup}.
\end{proof}

Theorem \ref{Thm: Explicit description of <a>} suggests us to define
the \emph{order} of an element in a gyrogroup as follows.
\begin{definition}\label{Def: order of element in a gyrogroup}
Let $G$ be a gyrogroup and let $a\in G$. The \textit{order} of $a$,
denoted by $\abs{a}$, is defined to be the cardinality of $\gen{a}$
if $\gen{a}$ is finite. In this case, we will write
$\abs{a}<\infty$. If $\gen{a}$ is infinite, the order of $a$ is
defined to be infinity, and we will write $\abs{a} = \infty$.
\end{definition}

\begin{proposition}\label{Prop: gyr[m.a, n.a] = id}
Let $G$ be a gyrogroup and let $a\in G$. For all $m, n\in\Z$,
$$\gyr{m\cdot a, n\cdot a}{} = \id{G}.$$
\end{proposition}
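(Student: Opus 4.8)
The plan is to transfer the problem from gyroautomorphisms to left gyrotranslations by means of the composition law (\ref{Eqn: Gyroauto and left-right translation}), $L_a\circ L_b = L_{a\oplus b}\circ\gyr{a, b}{}$, and to exploit the fact, already recorded in Lemma \ref{Lem: gyr[a, a. m] = gyr[a.m, a] = gyr[a, m. a] = gyr[m.a, a] = id}, that $\gyr{a, m\cdot a}{} = \id{G}$ for every $m\in\Z$. The first step is to establish the recursion $L_a\circ L_{m\cdot a} = L_{(m+1)\cdot a}$ for all $m\in\Z$: applying the composition law to the pair $(a, m\cdot a)$ gives $L_a\circ L_{m\cdot a} = L_{a\oplus(m\cdot a)}\circ\gyr{a, m\cdot a}{}$, where Proposition \ref{Prop: m.a + k.a = (m+k).a, integer case} simplifies the base point to $a\oplus(m\cdot a) = (m+1)\cdot a$ and the Lemma collapses the gyroautomorphism to $\id{G}$.

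The second step is to deduce the power law $L_{m\cdot a} = L_a^{\,m}$ for all $m\in\Z$, where $L_a^{\,m}$ denotes the $m$-fold composite, with $L_a^{\,0} = \id{G}$ and negative powers interpreted as composites of $L_a^{-1}$. This follows from the recursion by induction in both directions: upward from $L_{0\cdot a} = L_0 = \id{G}$ for $m\geq 0$, and downward via $L_{m\cdot a} = L_a^{-1}\circ L_{(m+1)\cdot a}$ for $m < 0$, each step being legitimate because every $L_a$ is a permutation of $G$.

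The final step is to compute $L_{m\cdot a}\circ L_{n\cdot a}$ in two ways. By the power law it equals $L_a^{\,m}\circ L_a^{\,n} = L_a^{\,m+n} = L_{(m+n)\cdot a}$; by the composition law together with Proposition \ref{Prop: m.a + k.a = (m+k).a, integer case} it also equals $L_{(m+n)\cdot a}\circ\gyr{m\cdot a, n\cdot a}{}$. Equating the two expressions and cancelling the permutation $L_{(m+n)\cdot a}$ forces $\gyr{m\cdot a, n\cdot a}{} = \id{G}$. I expect the only delicate point to be the bookkeeping for negative indices, where one must be sure the recursion and Proposition \ref{Prop: m.a + k.a = (m+k).a, integer case} are invoked in their full integer form before the power law and the cancellation can be applied; the remaining manipulations are formal identities among permutations of $G$.
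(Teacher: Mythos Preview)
Your proposal is correct and follows essentially the same route as the paper: establish the power law $L_{m\cdot a} = L_a^{\,m}$ by induction (using the composition law~(\ref{Eqn: Gyroauto and left-right translation}) together with $\gyr{a, m\cdot a}{}=\id{G}$), then read off $\gyr{m\cdot a, n\cdot a}{}$ from the identity $L_{m\cdot a}\circ L_{n\cdot a} = L_{(m+n)\cdot a}$. The paper states the power law as a one-line ``by induction'' and writes the final step as $\gyr{m\cdot a, n\cdot a}{} = L_{-(m+n)\cdot a}\circ L_{m\cdot a}\circ L_{n\cdot a} = L_a^{-(m+n)}\circ L_a^{\,m}\circ L_a^{\,n} = \id{G}$, but this is only a cosmetic rearrangement of your cancellation argument.
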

\begin{proof}
By induction, $L_{m\cdot a} = L_a^m$ for all $a\in G$ and all
$m\in\Z$. Since $L^{-1}_a = L_{\ominus a}$, we have from (\ref{Eqn:
Gyroauto and left-right translation}) that
$$\gyr{m\cdot a, n\cdot a}{} = L_{-(m+n)\cdot a}\circ L_{m\cdot a}\circ L_{n\cdot a}
= L_a^{-(m+n)}\circ L_a^m\circ L_a^n = \id{G}$$ for all $m, n\in\Z$.
\end{proof}
\par In light of the proof of Proposition \ref{Prop: gyr[m.a, n.a] =
id}, gyrogroups are \textit{left power alternative}. Further, the
following proposition implies that gyrogroups are \textit{power
associative}.
\begin{proposition}\label{Prop: Cyclic subgyrogroup is a cyclic group}
If $a$ is an element of a gyrogroup, then $\gen{a}$ forms a cyclic
group with generator $a$ under gyrogroup operation.
\end{proposition}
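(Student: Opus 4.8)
The plan is to exhibit $\gen{a}$ as a homomorphic image of the additive group $\Z$, so that its cyclicity follows from the corresponding fact for $\Z$. By Theorem \ref{Thm: Explicit description of <a>} we already know that $\gen{a} = \cset{m\cdot a}{m\in\Z}$ and that $\gen{a}$ is a subgroup of $G$; in particular $(\gen{a}, \oplus)$ is an associative group. It therefore remains only to identify this group structure as cyclic with the prescribed generator.

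First I would define $\varphi\colon\Z\to\gen{a}$ by $\varphi(m) = m\cdot a$. By Proposition \ref{Prop: m.a + k.a = (m+k).a, integer case}, $\varphi(m+k) = (m+k)\cdot a = (m\cdot a)\oplus(k\cdot a) = \varphi(m)\oplus\varphi(k)$ for all $m, k\in\Z$, so $\varphi$ is a homomorphism from $(\Z, +)$ into $(\gen{a}, \oplus)$. The explicit description of $\gen{a}$ in Theorem \ref{Thm: Explicit description of <a>} shows that $\varphi$ is surjective. By the first isomorphism theorem, $\gen{a}\cong\Z/\ker\varphi$, and every quotient of a cyclic group is cyclic; hence $(\gen{a}, \oplus)$ is a cyclic group.

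It then remains to check that $a$ itself is a generator. Since $\varphi(1) = 1\cdot a = a$ and $\varphi$ is a surjective homomorphism, the image of the generator $1$ of $\Z$ generates $\gen{a}$; equivalently, the repeated $\oplus$-products of $a$ are exactly the multiples $m\cdot a$, which by Theorem \ref{Thm: Explicit description of <a>} exhaust $\gen{a}$. Thus $\gen{a}$ is cyclic with generator $a$.

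I do not expect any serious obstacle here: the substantive work — verifying closure, the subgroup property, and the triviality of the restricted gyroautomorphisms — was already carried out in Theorem \ref{Thm: Explicit description of <a>} and Proposition \ref{Prop: m.a + k.a = (m+k).a, integer case}. The only point that warrants care is confirming that the recursively defined multiples $m\cdot a$ coincide with the group-theoretic powers of $a$ inside $(\gen{a}, \oplus)$; but this is immediate from the associativity of $\oplus$ on $\gen{a}$ together with the additive identity of Proposition \ref{Prop: m.a + k.a = (m+k).a, integer case}.
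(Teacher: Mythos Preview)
Your proof is correct and essentially the same as the paper's. The paper also invokes Theorem \ref{Thm: Explicit description of <a>} for the group structure and then verifies directly by induction that the recursively defined multiples $m\cdot a$ coincide with the group-theoretic powers $a^m$; your homomorphism $\varphi\colon m\mapsto m\cdot a$ packages this same observation via the first isomorphism theorem, but the substance is identical.
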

\begin{proof}
By Theorem \ref{Thm: Explicit description of <a>}, $\gen{a}$ is a
group under gyrogroup operation. By induction, $m\cdot a = a^m$ for
all $m\geq 0$, where the notation $a^m$ is used as in group theory.
If $m < 0$, one obtains similarly that $m\cdot a = a^m$. Hence,
$\gen{a}$ forms a cyclic group with generator $a$.
\end{proof}

\begin{corollary}\label{Cor: Gyrogroup generated by 1 element is cyclic}
Any gyrogroup generated by one element is a cyclic group.
\end{corollary}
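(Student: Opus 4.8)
The plan is to read the conclusion off directly from Proposition \ref{Prop: Cyclic subgyrogroup is a cyclic group}. By hypothesis, the gyrogroup $G$ is generated by a single element; unpacking the phrase ``generated by one element'' via Proposition \ref{Prop: Smallest subgyrogroup containing a set}, this means precisely that there exists some $a\in G$ with $G = \gen{a}$, where $\gen{a}$ is the cyclic subgyrogroup generated by $a$. So the first step is purely to fix such an $a$ and to record this identification of $G$ with $\gen{a}$.

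Having fixed such an $a$, I would then simply apply Proposition \ref{Prop: Cyclic subgyrogroup is a cyclic group}, which asserts that $\gen{a}$ forms a cyclic group with generator $a$ under the gyrogroup operation. Since $G$ and $\gen{a}$ are the same set carrying the same operation inherited from $G$, it follows immediately that $G$ itself is a cyclic group with generator $a$, which is the desired conclusion.

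I do not anticipate any genuine obstacle here: the entire mathematical content of the corollary is already contained in the preceding proposition, and the only thing to check is the notational point that being ``generated by one element'' is exactly the assertion $G = \gen{a}$ for some $a\in G$. In particular, no fresh gyrogroup-theoretic computation is required, since the substantive work—the explicit description of $\gen{a}$ as $\cset{m\cdot a}{m\in\Z}$ and the triviality of the relevant gyroautomorphisms—was already carried out in Theorem \ref{Thm: Explicit description of <a>} and Proposition \ref{Prop: gyr[m.a, n.a] = id}.
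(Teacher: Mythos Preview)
Your proposal is correct and is exactly the (implicit) approach of the paper: the corollary is stated there without proof, immediately after Proposition \ref{Prop: Cyclic subgyrogroup is a cyclic group}, because it follows at once from that proposition together with the definition of $\gen{a}$.
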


\par Because the \textit{group} order of $a$ and the \textit{gyrogroup} order of $a$
are the same, we obtain the following results.
\begin{proposition}\label{Prop: Characterization of |a|}
Let $G$ be a gyrogroup and let $a\in G$.
\begin{enumerate}
\item If $\abs{a}<\infty$, then $\abs{a}$ is the smallest positive integer such that $\abs{a}\cdot a = 0$.
\item If $\abs{a} = \infty$, then $m\cdot a \ne 0$ for all $m\ne 0$ and $m\cdot a\ne k\cdot a$ for all $m\ne k$ in $\Z$.
\end{enumerate}
\end{proposition}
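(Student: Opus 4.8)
The plan is to reduce both statements to standard facts about cyclic groups via Proposition~\ref{Prop: Cyclic subgyrogroup is a cyclic group}. That proposition tells us that $\gen{a}$ is a cyclic group with generator $a$ under $\oplus$, and that $m\cdot a = a^m$ for every $m\in\Z$, where $a^m$ denotes the usual group power inside $\gen{a}$. Since, by Definition~\ref{Def: order of element in a gyrogroup}, the gyrogroup order $\abs{a}$ is precisely the cardinality of $\gen{a}$, it coincides with the group-theoretic order of the element $a$ inside the group $\gen{a}$. Once this identification is in place, each part follows from the corresponding property of a cyclic group, and the whole argument becomes a dictionary translation between the gyrogroup notation $m\cdot a$ and the group notation $a^m$.

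For part (1), I would first note that $\abs{a}<\infty$ means $\gen{a}$ is a finite cyclic group, and that the order of a finite cyclic group equals the order of its generator. Hence $\abs{a}$ is the least positive integer $n$ with $a^n = 0$; rewriting $a^n$ as $n\cdot a$ gives that $\abs{a}$ is the smallest positive integer satisfying $\abs{a}\cdot a = 0$.

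For part (2), if $\abs{a} = \infty$ then $\gen{a} = \cset{a^m}{m\in\Z}$ is infinite, so the exponent map $m\mapsto a^m$ cannot repeat values; that is, $a^m = a^k$ forces $m = k$. Translating back, $m\cdot a\ne k\cdot a$ whenever $m\ne k$ in $\Z$, and specializing to $k = 0$ yields $m\cdot a\ne 0$ for all $m\ne 0$.

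I do not expect a serious obstacle here, since the substantive work has already been carried out in establishing that $\gen{a}$ is a cyclic group with generator $a$. The only point requiring a moment's care is the implicit appeal in part (1) to the equality between the cardinality of a finite cyclic group and the order of its generator; this is what guarantees that the cardinality-based definition of $\abs{a}$ agrees with the ``smallest positive integer'' characterization claimed in the statement.
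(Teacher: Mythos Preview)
Your proposal is correct and matches the paper's approach exactly: the paper does not write out a formal proof but simply remarks, immediately before the proposition, that ``the \textit{group} order of $a$ and the \textit{gyrogroup} order of $a$ are the same,'' and then states the result as an immediate consequence. Your argument is precisely the spelled-out version of that one-line justification, invoking Proposition~\ref{Prop: Cyclic subgyrogroup is a cyclic group} to identify $m\cdot a$ with $a^m$ and then quoting the standard cyclic-group facts.
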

\begin{corollary}\label{Cor: <a>, a is of finite order}
Let $a$ be an element of a gyrogroup. If $\abs{a} = n<\infty$, then
$$\gen{a} = \set{0\cdot a, 1\cdot a,\dots, (n-1)\cdot a}.$$
\end{corollary}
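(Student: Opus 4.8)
The plan is to reduce the assertion to the standard description of a finite cyclic group. By Proposition~\ref{Prop: Cyclic subgyrogroup is a cyclic group}, $\gen{a}$ is a cyclic group with generator $a$ under $\oplus$, and $m\cdot a = a^m$ for every $m\in\Z$. Since the gyrogroup order of $a$ coincides with its group order, the hypothesis $\abs{a} = n$ says that this cyclic group has order $n$, so the classical theory of cyclic groups immediately yields $\gen{a} = \set{0\cdot a, 1\cdot a,\dots,(n-1)\cdot a}$. Below I record the short self-contained argument that unwinds this, using only the results established above.

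By Theorem~\ref{Thm: Explicit description of <a>} every element of $\gen{a}$ has the form $m\cdot a$ for some $m\in\Z$. Given such an element, I would invoke the division algorithm to write $m = qn + r$ with $0\le r < n$. Applying Proposition~\ref{Prop: m.a + k.a = (m+k).a, integer case} repeatedly together with the identity $n\cdot a = 0$, which holds by Proposition~\ref{Prop: Characterization of |a|}(1), I would first compute $(qn)\cdot a = 0$ and then
$$m\cdot a = \bigl((qn)\cdot a\bigr)\oplus(r\cdot a) = 0\oplus(r\cdot a) = r\cdot a.$$
Hence $\gen{a}\subseteq\set{0\cdot a, 1\cdot a,\dots,(n-1)\cdot a}$, and the reverse inclusion is immediate from Theorem~\ref{Thm: Explicit description of <a>}.

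It then remains to check that the $n$ listed elements are pairwise distinct. Suppose $i\cdot a = j\cdot a$ with $0\le i < j\le n-1$. Writing $j\cdot a = (i\cdot a)\oplus\bigl((j-i)\cdot a\bigr)$ by Proposition~\ref{Prop: m.a + k.a = (m+k).a, integer case} and $i\cdot a = (i\cdot a)\oplus 0$, the general left cancellation law forces $(j-i)\cdot a = 0$ with $0 < j-i < n$, contradicting the minimality of $n$ from Proposition~\ref{Prop: Characterization of |a|}(1). Thus the $n$ elements are distinct, and since $\abs{\gen{a}} = n$ by the definition of $\abs{a}$, the displayed set equality follows. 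The only point needing mild care is the computation $(qn)\cdot a = 0$, which is pure bookkeeping with the additivity of $m\mapsto m\cdot a$; otherwise the argument is the routine cyclic-group count transcribed into the notation $m\cdot a$.
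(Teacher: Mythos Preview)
Your proposal is correct and matches the paper's approach: the paper does not give a detailed proof but simply notes that since the gyrogroup order of $a$ equals its group order in the cyclic group $\gen{a}$, this corollary (along with Proposition~\ref{Prop: Characterization of |a|} and Corollary~\ref{Cor: Order of m.a}) follows from standard facts about cyclic groups, which is precisely what your first paragraph says. Your additional self-contained argument is a faithful unwinding of that reduction and is fine, though strictly more than the paper itself records.
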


\begin{corollary}\label{Cor: Order of m.a}
Let $a$ be an element of a gyrogroup and let
$m\in\Z\setminus\set{0}$.
\begin{enumerate}
\item If $\abs{a} = \infty$, then $\abs{m\cdot a} = \infty$.
\item If $\abs{a} < \infty$, then $\abs{m\cdot a} = \dfrac{\abs{a}}{\gcd{(\abs{a}, m)}}$.
\end{enumerate}
\end{corollary}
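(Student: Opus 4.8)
The plan is to reduce the entire statement to a routine computation inside the cyclic group $\gen{a}$. By Theorem \ref{Thm: Explicit description of <a>} and Proposition \ref{Prop: Cyclic subgyrogroup is a cyclic group}, $\gen{a}$ is a cyclic group under $\oplus$ in which $j\cdot a = a^j$ for every $j\in\Z$; in particular $m\cdot a = a^m$. Since $m\cdot a\in\gen{a}$ and $\gen{a}\leqslant G$, the closure property of subgyrogroups forces $\gen{m\cdot a}\subseteq\gen{a}$, so $\gen{m\cdot a}$ sits inside the group $\gen{a}$. Consequently the gyrogroup order $\abs{m\cdot a}$, which by Definition \ref{Def: order of element in a gyrogroup} is the cardinality of $\gen{m\cdot a}$, coincides with the ordinary group-theoretic order of the element $a^m$, the latter being independent of the ambient group. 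Thus I only need to know the order of a power of a generator in a cyclic group.

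The bridge I would make explicit is the identity $k\cdot(m\cdot a) = (km)\cdot a$ for all $k\in\Z$. This follows at once from the group structure: applying Proposition \ref{Prop: Cyclic subgyrogroup is a cyclic group} to the element $m\cdot a$ gives $k\cdot(m\cdot a) = (m\cdot a)^k = (a^m)^k = a^{km} = (km)\cdot a$, all computed inside $\gen{a}$. With this translation between the two multiple notations in hand, each case reduces to the vanishing condition $(km)\cdot a = 0$.

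For part (1), suppose $\abs{a} = \infty$. By Proposition \ref{Prop: Characterization of |a|}(2), $j\cdot a\ne 0$ whenever $j\ne 0$. If $k\cdot(m\cdot a) = (km)\cdot a = 0$ for some positive integer $k$, then $km = 0$, forcing $k = 0$ because $m\ne 0$, a contradiction. Hence no positive integer annihilates $m\cdot a$, and therefore $\abs{m\cdot a} = \infty$. For part (2), suppose $\abs{a} = n < \infty$. By Proposition \ref{Prop: Characterization of |a|}(1), $\abs{m\cdot a}$ is the least positive integer $k$ with $(km)\cdot a = 0$, equivalently with $n\mid km$. Writing $d = \gcd(n, m)$ and cancelling $d$, the divisibility $n\mid km$ is equivalent to $(n/d)\mid k(m/d)$; since $\gcd(n/d, m/d) = 1$, this holds if and only if $(n/d)\mid k$, so the least such positive $k$ is $n/d$. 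Therefore $\abs{m\cdot a} = n/\gcd(n, m) = \abs{a}/\gcd(\abs{a}, m)$.

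The only real content is the reduction to $\gen{a}$, and even that is light given the earlier results: the main thing to get right is the bookkeeping for the nested multiple $k\cdot(m\cdot a)$ and the transfer of orders from the gyrogroup to the group $\gen{a}$. Once $\abs{m\cdot a}$ is identified with the order of $a^m$ in a cyclic group, the remainder is the classical number-theoretic computation, which I expect to be the least troublesome step.
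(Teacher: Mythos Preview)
Your proposal is correct and follows exactly the approach the paper intends. The paper states this corollary without proof, deriving it from the remark that the gyrogroup order of $a$ coincides with the group-theoretic order of $a$ in the cyclic group $\gen{a}$; your argument is precisely the natural unpacking of that remark, carried out via the identity $k\cdot(m\cdot a)=(km)\cdot a$ inside $\gen{a}$.
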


\section{Gyrogroup Homomorphisms}
\par A \textit{gyrogroup homomorphism} is a map between gyrogroups
that preserves the \mbox{gyrogroup} operations. A bijective
gyrogroup homomorphism is called a \textit{gyrogroup isomorphism}.
We say that gyrogroups $G$ and $H$ are \textit{isomorphic}, written
$G\cong H$, if there exists a gyrogroup isomorphism from $G$ to $H$.

\par Suppose that $\vphi\colon G\to H$ is a gyrogroup homomorphism.
The kernel of $\vphi$ is defined to be the inverse image of the
trivial subgyrogroup $\set{0}$ under $\vphi$. Since $\ker{\vphi}$ is
invariant under all the gyroautomorphisms of $G$, the operation
\begin{equation}\label{Eqn: Quotient gyrogroup operation}
(a\oplus\ker{\vphi})\oplus (b\oplus \ker{\vphi}) := (a\oplus
b)\oplus \ker{\vphi}, \hskip1cm a, b\in G,
\end{equation}
is independent of the choice of representatives for the left cosets.
The system $(G/\ker{\vphi}, \oplus)$ forms a gyrogroup, called a
\textit{quotient gyrogroup}. This results in the first isomorphism
theorem for gyrogroups.
\begin{theorem}[{\hskip-0.5pt}\cite{TSKW2014}, The First Isomorphism Theorem]
\label{Thm: The 1st isomorphism theorem} If $\vphi$ is a gyrogroup
homomorphism of $G$, then $G/\ker{\vphi}\cong \vphi(G)$ as
gyrogroups.
\end{theorem}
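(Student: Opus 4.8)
The plan is to prove this via the standard first-isomorphism argument, adapted to the gyrogroup setting. Write $K = \ker{\vphi}$ and define a map $\Phi\colon G/K\to\vphi(G)$ by $\Phi(a\oplus K) = \vphi(a)$. I would then show that $\Phi$ is a well-defined bijective gyrogroup homomorphism. Before anything else, I would record two elementary facts about $\vphi$: that $\vphi(0) = 0$ and that $\vphi(\ominus a) = \ominus\vphi(a)$ for all $a\in G$. Both follow from applying $\vphi$ to the defining identities $0\oplus 0 = 0$ and $\ominus a\oplus a = 0$ and then invoking the cancellation laws together with the uniqueness of inverses.

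The heart of the argument is the equivalence
\[
a\oplus K = b\oplus K\iff\vphi(a) = \vphi(b),
\]
which delivers the well-definedness of $\Phi$ (the $\Rightarrow$ direction) and its injectivity (the $\Leftarrow$ direction) in one stroke. For the easy direction, suppose $a\oplus K = b\oplus K$. Since $a = a\oplus 0\in a\oplus K = b\oplus K$, we may write $a = b\oplus k$ for some $k\in K$; the left cancellation law then gives $\ominus b\oplus a = k\in K$, and applying $\vphi$ yields $\ominus\vphi(b)\oplus\vphi(a) = \vphi(k) = 0$, so $\vphi(a) = \vphi(b)$.

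The delicate direction, and the step I expect to be the main obstacle, is the converse: from $\vphi(a) = \vphi(b)$ I obtain $\ominus b\oplus a\in K$, hence $a = b\oplus k$ with $k\in K$, and I must show $a\oplus K = b\oplus K$. The difficulty is that, unlike in a group, the naive manipulation $(b\oplus k)\oplus k'$ does not simply reassociate. Here I would invoke the right gyroassociative law to write $(b\oplus k)\oplus k' = b\oplus(k\oplus\gyr{k, b}{k'})$ for an arbitrary $k'\in K$. Because $K = \ker{\vphi}$ is invariant under every gyroautomorphism of $G$, we have $\gyr{k, b}{k'}\in K$, and since $K$ is a subgyrogroup it is closed, so $k\oplus\gyr{k, b}{k'}\in K$. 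Thus every element $a\oplus k'$ of $a\oplus K$ lies in $b\oplus K$, giving $a\oplus K\subseteq b\oplus K$; the reverse inclusion follows by the symmetric argument, and equality holds.

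It remains to check the routine points. The homomorphism property is immediate from the quotient operation (\ref{Eqn: Quotient gyrogroup operation}): $\Phi((a\oplus K)\oplus(b\oplus K)) = \Phi((a\oplus b)\oplus K) = \vphi(a\oplus b) = \vphi(a)\oplus\vphi(b) = \Phi(a\oplus K)\oplus\Phi(b\oplus K)$. Surjectivity is clear, since any element of $\vphi(G)$ has the form $\vphi(a) = \Phi(a\oplus K)$. Together with the equivalence established above, $\Phi$ is a bijective gyrogroup homomorphism, so $G/\ker{\vphi}\cong\vphi(G)$ as gyrogroups.
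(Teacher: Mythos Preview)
Your argument is correct. The paper itself does not supply a proof of this theorem; it is quoted from \cite{TSKW2014} and stated for later use, so there is nothing to compare your approach against beyond noting that what you wrote is the expected standard argument. The only step worth a small remark is your appeal to symmetry for the reverse inclusion $b\oplus K\subseteq a\oplus K$: this is legitimate because from $\vphi(a)=\vphi(b)$ one equally obtains $\ominus a\oplus b\in K$, so the roles of $a$ and $b$ are genuinely interchangeable at that point.
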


\par  A subgyrogroup $N$ of a gyrogroup $G$ is \emph{normal in $G$},
denoted by $N\unlhd G$, if it is the kernel of a gyrogroup
homomorphism of $G$. By Theorem \ref{Thm: The 1st isomorphism
theorem}, every normal subgyrogroup $N$ gives rise to the quotient
gyrogroup $G/N$, along with the \textit{canonical projection}
$\Pi\colon a\mapsto a\oplus N$.

\par We state the second isomorphism theorem for gyrogroups for easy reference; its proof can
be found in \cite{TSKW2014}.
\begin{theorem}[The Second Isomorphism Theorem]\label{Thm: The second isomorphism theorem}
Let $G$ be a gyrogroup and let $A, B\leqslant G$. If $B\unlhd G$,
then $A\oplus B\leqslant G$, $A\cap B\unlhd A$, and $(A\oplus
B)/B\cong A/(A\cap B)$ as gyrogroups.
\end{theorem}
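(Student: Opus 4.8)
The plan is to imitate the classical group-theoretic proof through the canonical projection, paying attention to the two places where the non-associativity of $\oplus$ could interfere. Since $B\unlhd G$, let $\Pi\colon G\to G/B$ be the canonical projection $g\mapsto g\oplus B$, and let $\psi = \res{\Pi}{A}$ be its restriction to $A$. As $\Pi$ is a gyrogroup homomorphism and $A\leqslant G$, the map $\psi\colon A\to G/B$ is a gyrogroup homomorphism, and all three assertions will be extracted from $\psi$ together with the First Isomorphism Theorem (Theorem \ref{Thm: The 1st isomorphism theorem}).

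First I would settle $A\oplus B\leqslant G$, and this is the step I expect to be the main obstacle. A head-on verification of the subgyrogroup criterion for $A\oplus B = \set{a\oplus b : a\in A,\, b\in B}$ founders on the gyration terms hidden in $\ominus(a\oplus b)$ and in products $(a_1\oplus b_1)\oplus(a_2\oplus b_2)$, so I would avoid it. Because the cosets of $B$ partition $G$, two cosets coincide iff they share a point, whence $A\oplus B = \lcup{a\in A}{}(a\oplus B) = \Pi^{-1}(\Pi(A))$. Now $\Pi(A) = \psi(A)$ is a subgyrogroup of $G/B$, and the $\Pi$-preimage of a subgyrogroup of $G/B$ is a subgyrogroup of $G$; both facts are immediate from the subgyrogroup criterion (Proposition \ref{Pro: Equivalent condition for subgyrogroup}), since $\Pi$ commutes with $\oplus$ and $\ominus$. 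Hence $A\oplus B = \Pi^{-1}(\psi(A))\leqslant G$. Moreover $B\subseteq A\oplus B$, and $B$ stays normal there: if $B = \ker f$ for a homomorphism $f$ of $G$, then $B = \ker(\res{f}{A\oplus B})$, so $(A\oplus B)/B$ is a genuine quotient gyrogroup.

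Next I would read off the kernel and image of $\psi$. For the kernel, $\psi(a) = B$ means $a\oplus B = B$, which holds exactly when $a\in B$ — one direction from $a = a\oplus 0\in a\oplus B$, the other because $L_a$ restricts to a bijection of $B$ whenever $a\in B$ — so $\ker\psi = A\cap B$; being a kernel, $A\cap B\unlhd A$. For the image, I claim $\psi(A) = (A\oplus B)/B$. The inclusion $\subseteq$ is clear from $a = a\oplus 0\in A\oplus B$. For $\supseteq$, take $x = a\oplus b$ with $a\in A$, $b\in B$ and use the well-defined quotient operation (\ref{Eqn: Quotient gyrogroup operation}): $(a\oplus b)\oplus B = (a\oplus B)\oplus(b\oplus B) = (a\oplus B)\oplus B = a\oplus B$, since $b\oplus B = B$ and $B$ is the identity of the quotient. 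Thus every coset of $B$ lying in $A\oplus B$ already belongs to $\psi(A)$, and this lone identity is where the potential non-associativity is disarmed.

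Finally, applying the First Isomorphism Theorem to $\psi\colon A\to G/B$ gives $A/\ker\psi\cong\psi(A)$, that is, $A/(A\cap B)\cong(A\oplus B)/B$, which, combined with the first two paragraphs, yields all three conclusions. The only genuinely gyrogroup-specific ingredients are the preimage description used to establish closure of $A\oplus B$, the coset identity $(a\oplus b)\oplus B = a\oplus B$, and the observation that normality restricts to any intermediate subgyrogroup; everything else is formally identical to the group-theoretic argument.
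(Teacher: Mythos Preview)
The paper does not actually prove this theorem here; it merely states it ``for easy reference'' and defers the proof to \cite{TSKW2014}. So there is no in-text argument to compare against.

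On its own merits, your proof is correct and is the natural adaptation of the classical group-theoretic argument. The two potentially delicate points are handled properly: you sidestep any direct manipulation of $(a_1\oplus b_1)\oplus(a_2\oplus b_2)$ by recognising $A\oplus B=\Pi^{-1}(\Pi(A))$ and using that preimages of subgyrogroups are subgyrogroups; and you obtain the coset identity $(a\oplus b)\oplus B=a\oplus B$ from the well-definedness of the quotient operation \eqref{Eqn: Quotient gyrogroup operation} rather than from any associativity-like identity. The identification of $\ker\psi$ and $\psi(A)$ and the final appeal to Theorem~\ref{Thm: The 1st isomorphism theorem} are routine. Nothing is missing.
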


\section{The Lagrange Property}
\par Throughout this section, all gyrogroups are finite.
A version of the Lagrange property for loops can be found in
\cite{OCMKARPV2003}. In terms of gyrogroups, the Lagrange property
can be restated as follows.
\begin{definition}\label{Def: Lagrange property}
A gyrogroup $G$ is said to have the \textit{Lagrange property} if
for each subgyrogroup $H$ of $G$, the order of $H$ divides the order
of $G$.
\end{definition}

\par A version of the following proposition for loops was proved by Bruck
in \cite{RB1971}. As the first isomorphism theorem and the second
isomorphism theorem hold for gyrogroups, we also have the following
proposition:
\begin{proposition}\label{Prop: B and K/B have LP --> K has LP}
Let $H$ be a subgyrogroup of a gyrogroup $G$ and let $B$ be a normal
subgyrogroup of $H$. If $B$ and $H/B$ have the Lagrange property,
then so has $H$.
\end{proposition}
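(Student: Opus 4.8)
The plan is to mimic the classical group-theoretic argument that Lagrange-like behaviour is preserved under extensions, using the two isomorphism theorems that the excerpt has already provided for gyrogroups. The key observation is that $H$ is finite, so all the relevant orders are finite positive integers, and the statement ``the order of $B$ divides the order of $H$'' can be deduced by writing $\abs{H}$ as a product $\abs{H/B}\cdot\abs{B}$ and then invoking the Lagrange property of the two smaller pieces to handle an arbitrary subgyrogroup $K\leqslant H$.

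First I would record the fundamental counting identity $\abs{H} = \abs{H/B}\cdot\abs{B}$. Since $B\unlhd H$, the first isomorphism theorem (Theorem \ref{Thm: The 1st isomorphism theorem}) guarantees that $H/B$ is a genuine gyrogroup and the canonical projection $\Pi\colon H\to H/B$ is a surjective gyrogroup homomorphism with kernel $B$. The fibres of $\Pi$ are exactly the left cosets $a\oplus B$, and by the general left cancellation law the map $b\mapsto a\oplus b$ is a bijection from $B$ onto $a\oplus B$, so every fibre has cardinality $\abs{B}$. Counting $H$ by fibres therefore yields $\abs{H} = \abs{H/B}\cdot\abs{B}$, which is the case $K=H$ of the divisibility we want and also the engine for the general case.

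Next I would take an arbitrary subgyrogroup $K\leqslant H$ and show $\abs{K}\mid\abs{H}$. The natural move is to intersect $K$ with $B$ and project $K$ into $H/B$. By the second isomorphism theorem (Theorem \ref{Thm: The second isomorphism theorem}), since $B\unlhd H$ we have $K\cap B\unlhd K$, the set $K\oplus B$ is a subgyrogroup of $H$, and $(K\oplus B)/B\cong K/(K\cap B)$ as gyrogroups. Now $K\cap B$ is a subgyrogroup of $B$, so by the Lagrange property of $B$ its order divides $\abs{B}$; and $(K\oplus B)/B$ is a subgyrogroup of $H/B$, so by the Lagrange property of $H/B$ its order divides $\abs{H/B}$. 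Applying the fibre-counting identity to the pair $(K, K\cap B)$ in place of $(H,B)$ gives $\abs{K} = \abs{K/(K\cap B)}\cdot\abs{K\cap B} = \abs{(K\oplus B)/B}\cdot\abs{K\cap B}$, and both factors on the right divide the corresponding factors of $\abs{H}=\abs{H/B}\cdot\abs{B}$. Hence $\abs{K}\mid\abs{H}$, and since $K$ was arbitrary, $H$ has the Lagrange property.

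The main obstacle I anticipate is not the arithmetic but making sure every invocation of the isomorphism theorems is legitimate in the gyrogroup setting: I must confirm that $K\cap B$ is genuinely normal in $K$ (supplied by the second isomorphism theorem), that $(K\oplus B)/B$ really is a subgyrogroup of the quotient $H/B$ so that $H/B$'s Lagrange property applies to it, and that the counting identity $\abs{L}=\abs{L/N}\cdot\abs{N}$ is valid for any normal subgyrogroup $N$ of a finite gyrogroup $L$ rather than only for $(H,B)$. Each of these is a structural fact about quotient gyrogroups and cosets rather than a computation, so the proof stays short once those citations are pinned down; the isomorphism $(K\oplus B)/B\cong K/(K\cap B)$ is what transfers the divisibility through the extension and is the crux of the argument.
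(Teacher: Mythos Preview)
Your proposal is correct and is precisely the argument the paper has in mind: the paper does not spell out a proof but simply remarks that Bruck's loop-theoretic argument goes through because the first and second isomorphism theorems hold for gyrogroups, and your write-up is exactly that argument. The structural checks you flag (that $K\cap B\unlhd K$, that $(K\oplus B)/B\leqslant H/B$, and that $\abs{L}=\abs{L/N}\cdot\abs{N}$ for any $N\unlhd L$) are all supplied by Theorems~\ref{Thm: The 1st isomorphism theorem} and~\ref{Thm: The second isomorphism theorem} applied inside $H$, so there is no gap.
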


\begin{corollary}\label{Cor: N and G/N have LP --> G has LP}
Let $N$ be a normal subgyrogroup of a gyrogroup $G$. If $N$ and
$G/N$ have the Lagrange property, then so has $G$.
\end{corollary}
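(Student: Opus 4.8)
The plan is to obtain Corollary \ref{Cor: N and G/N have LP --> G has LP} as a direct specialization of Proposition \ref{Prop: B and K/B have LP --> K has LP}. The statement to prove is about a single gyrogroup $G$ with a normal subgyrogroup $N$, whereas the preceding proposition is about a subgyrogroup $H$ of $G$ together with a normal subgyrogroup $B$ of $H$. First I would take $H = G$ in that proposition. Since every gyrogroup is a subgyrogroup of itself, this is a legitimate choice.

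With $H = G$, the hypothesis ``$B$ is a normal subgyrogroup of $H$'' becomes ``$B$ is a normal subgyrogroup of $G$.'' So I would set $B = N$, which matches the hypothesis of the corollary. Then the quotient $H/B$ becomes $G/N$, and the assumption that $B$ and $H/B$ have the Lagrange property becomes exactly the assumption that $N$ and $G/N$ have the Lagrange property, which is what the corollary grants. The conclusion of the proposition, namely that $H$ has the Lagrange property, then reads: $G$ has the Lagrange property. This is precisely the desired conclusion.

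I do not anticipate a genuine obstacle here, since the corollary is logically weaker than the proposition and follows by a one-line substitution. The only point worth a moment's care is verifying that taking $H = G$ is admissible in Proposition \ref{Prop: B and K/B have LP --> K has LP}; this is immediate because $G$ is trivially a subgyrogroup of itself (it is nonempty and closed under $\ominus$ and $\oplus$, so it satisfies the subgyrogroup criterion of Proposition \ref{Pro: Equivalent condition for subgyrogroup}). Hence the entire proof is the sentence: apply Proposition \ref{Prop: B and K/B have LP --> K has LP} with $H = G$ and $B = N$.
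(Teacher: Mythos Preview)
Your proposal is correct and matches the paper's own proof exactly: the paper simply writes ``Taking $H = G$ in the proposition, the corollary follows.'' Your additional remarks merely spell out why this specialization is legitimate, which is fine but not required.
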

\begin{proof}
Taking H = G in the proposition, the corollary follows.
\end{proof}

\begin{proposition}\label{Prop: Subgroup has the weak Lagrange property}
Let $X$ be a subgroup of a gyrogroup $G$. If $H\leqslant X$, then
$\abs{H}$ divides $\abs{X}$. In other words, every subgroup of $G$
has the Lagrange property.
\end{proposition}
\begin{proof}
Suppose that $H\leqslant X$. Since $\res{\gyr{a, b}{}}{H} = \id{H}$
for all $a, b\in H$, $H$ forms a subgroup of $G$. By definition, $X$
forms a group and $H$ becomes a subgroup of $X$. By Lagrange's
theorem for groups, $\abs{H}$ divides $\abs{X}$.
\end{proof}

\par Lagrange's theorem holds for \emph{all} gyrocommutative gyrogroups,
as shown by Baumeister and Stein in \cite[Theorem 3]{BBAS2011} in
the language of Bruck loops.
\begin{theorem}\label{Thm: Lagrange's Theorem for gyrocomm. gyrogroup}
In a gyrocommutative gyrogroup $G$, if $H\leqslant G$, then
$\abs{H}$ \mbox{divides} $\abs{G}$. In other words, every
gyrocommutative gyrogroup has the Lagrange property.
\end{theorem}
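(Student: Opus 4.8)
The plan is to recast the statement in the language of loop theory and then to quote the theorem of Baumeister and Stein \cite[Theorem~3]{BBAS2011}. Recall from Section~2 that every gyrogroup is a left Bol loop with the $\mathrm{A}_\ell$-property, and conversely every such loop is a gyrogroup \cite{LS1998AM}, the gyroautomorphisms being exactly the left inner mappings. The additional structure of a \emph{gyrocommutative} gyrogroup translates, under this correspondence, into the automorphic inverse property $\ominus(a\oplus b)=\ominus a\oplus\ominus b$ (see \cite{AU2008}); hence gyrocommutative gyrogroups are precisely the left Bol loops with the automorphic inverse property, that is, the \emph{Bruck loops}. Since Baumeister and Stein have proved that every finite Bruck loop has the Lagrange property, the theorem will reduce to verifying that this dictionary carries the hypotheses and the conclusion across faithfully.

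First I would check that the transported notions agree. By the subgyrogroup criterion (Proposition~\ref{Pro: Equivalent condition for subgyrogroup}), a subgyrogroup $H\leqslant G$ is nothing but a nonempty subset closed under $\oplus$ and under $\ominus$, so $H$ is a subloop of the Bruck loop underlying $G$; and since $\res{\gyr{a,b}{}}{H}\in\aut{H,\oplus}$ for all $a,b\in H$ while the gyrocommutative law already holds throughout $G$, the subset $H$ is itself a gyrocommutative gyrogroup, i.e.\ a sub-Bruck-loop. The underlying sets are literally unchanged, so the orders $\abs{H}$ and $\abs{G}$ are the same on both sides, and Definition~\ref{Def: Lagrange property} is word-for-word the Lagrange property for loops. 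Thus the assertion ``$\abs{H}$ divides $\abs{G}$ for every $H\leqslant G$'' is exactly the statement that the finite Bruck loop underlying $G$ has the Lagrange property.

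With the dictionary in hand, the conclusion is immediate: applying \cite[Theorem~3]{BBAS2011} to the Bruck loop associated with $G$ shows that the order of every subloop divides $\abs{G}$, which is what we want.

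The genuine depth of the theorem is not on the gyrogroup side at all but is packaged inside the cited result, whose proof rests on Glauberman's analysis of Bruck loops of odd order together with a structural study of the even part. Accordingly, the only non-mechanical step in the present argument is the faithful verification of the equivalence between gyrocommutative gyrogroups and Bruck loops — in particular, that gyrocommutativity is equivalent to the automorphic inverse property on top of the left Bol and $\mathrm{A}_\ell$ axioms, and that subgyrogroups and subloops correspond with matching orders; everything else is routine bookkeeping.
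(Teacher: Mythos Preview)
Your proposal is correct and follows essentially the same approach as the paper: identify the gyrocommutative gyrogroup with a Bruck loop, observe that a subgyrogroup is a subloop, and invoke \cite[Theorem~3]{BBAS2011}. The paper's proof is terser, but the content is identical.
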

\begin{proof}
Let $G$ be a gyrocommutative gyrogroup and let $H\leqslant G$. Then
$G$ is a Bruck loop and $H$ becomes a subloop of $G$. By Theorem 3
of \cite{BBAS2011}, $\abs{H}$ divides $\abs{G}$, which completes the
proof.
\end{proof}

\par The next theorem, due to Foguel and Ungar,
enables us to extend Lagrange's theorem to all finite gyrogroups.
\begin{theorem}[{\hskip-0.5pt}\cite{TFAU2000}, Theorem 4.11]\label{Thm: Gyrogroup has a normal subgroup, Foguel and Ungar}
If $G$ is a gyrogroup, then $G$ has a normal subgroup $N$ such that
$G/N$ is a gyrocommutative gyrogroup.
\end{theorem}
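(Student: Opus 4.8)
The plan is to reduce gyrocommutativity of a quotient to a single closure condition and then to locate a normal subgroup satisfying it. First I would record the standard characterization that a gyrogroup is gyrocommutative if and only if it enjoys the \emph{automorphic inverse property} $\ominus(x\oplus y)=\ominus x\oplus\ominus y$ for all $x,y$ (this is Ungar's theorem; I would either cite it or derive it from the gyrator identity of Theorem \ref{Thm: Alternative def. of gyrogroup}). Granting this, for a normal subgyrogroup $N\unlhd G$ the quotient $G/N$ is gyrocommutative precisely when it satisfies the automorphic inverse property, and a direct coset computation turns this into a closure statement: since $\ominus\bar x=\overline{\ominus x}$ and $\bar x\oplus\bar y=\overline{x\oplus y}$ in $G/N$, the identity $\overline{\ominus(x\oplus y)}=\overline{\ominus x\oplus\ominus y}$ holds for all $x,y$ if and only if the \emph{gyrocommutator} $c(x,y):=(x\oplus y)\oplus(\ominus x\oplus\ominus y)$ lies in $N$ for all $x,y\in G$. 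Thus the theorem is equivalent to: $G$ has a normal \emph{subgroup} $N$ (a subgyrogroup with $\res{\gyr{a,b}{}}{N}=\id{N}$) containing every $c(x,y)$.

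Second, to produce such an $N$ I would pass to the left multiplication group $L:=\langle L_a:a\in G\rangle\le\mathrm{Sym}(G)$, where the composition law \eqref{Eqn: Gyroauto and left-right translation} shows that the point stabilizer $\Gamma:=\{f\in L: f(0)=0\}$ coincides with the gyration group $\langle\gyr{a,b}{}:a,b\in G\rangle$, and that $a\mapsto L_a$ is a bijection of $G$ onto a left transversal of $\Gamma$ in $L$ (using $L_a(0)=a\oplus 0=a$). Inside the honest group $L$ I may use ordinary commutator calculus. I would let $M\trianglelefteq L$ be the normal subgroup generated by the translations $L_{c(x,y)}$, define $N$ to be the image of $M$ under the transversal projection $L\to G$, $\,L_a\gamma\mapsto a$ (with $\gamma\in\Gamma$), and then verify that $N$ is a nonempty subset of $G$ closed under $\oplus$ and $\ominus$, hence a subgyrogroup by Proposition \ref{Pro: Equivalent condition for subgyrogroup}; normality of $M$ in $L$, together with the fact that $\Gamma$ acts on $G$ by the gyroautomorphisms, would give normality of $N$ in $G$, so that $G/N$ is a quotient gyrogroup in the sense of Theorem \ref{Thm: The 1st isomorphism theorem}. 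Since every $c(x,y)\in N$, the reduction in the first step makes $G/N$ gyrocommutative.

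The remaining, and genuinely hardest, point is to force $N$ to be a \emph{subgroup}, i.e. $\res{\gyr{a,b}{}}{N}=\id{N}$ for all $a,b\in N$ (Proposition \ref{Prop: Characterization of subgroup}); controlling the gyrations that live \emph{inside} the normal closure of the gyrocommutators is where the real work lies, since a priori these gyrations need not restrict to the identity. I expect to overcome this by choosing $M$ carefully relative to $\Gamma$, arranging that $M\cap\Gamma$ is normalized so that the gyroautomorphisms act trivially on the transversal elements comprising $N$, which is exactly the structural analysis of the twisted subgroup/transversal decomposition carried out by Foguel and Ungar. Once $N$ is known to be a normal subgroup with $G/N$ gyrocommutative, Theorem \ref{Thm: Gyrogroup has a normal subgroup, Foguel and Ungar} follows, and it then feeds into the Lagrange argument via Proposition \ref{Prop: Subgroup has the weak Lagrange property}, Theorem \ref{Thm: Lagrange's Theorem for gyrocomm. gyrogroup}, and Corollary \ref{Cor: N and G/N have LP --> G has LP}.
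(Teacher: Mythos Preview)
The paper gives no proof of this theorem; it is quoted verbatim from Foguel and Ungar \cite[Theorem 4.11]{TFAU2000} and used as a black box. So there is no ``paper's own proof'' to compare your proposal against.

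As to your outline itself: the first reduction is sound --- the automorphic inverse property characterization of gyrocommutativity is correct, and for a normal subgyrogroup $N$ the coset condition $\overline{\ominus(x\oplus y)}=\overline{\ominus x\oplus\ominus y}$ does translate to $(x\oplus y)\oplus(\ominus x\oplus\ominus y)\in N$. Passing to the left multiplication group is also the right ambient setting, and is indeed where Foguel and Ungar work.

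However, your second step has two genuine gaps. First, the transversal projection $L_a\gamma\mapsto a$ is not a homomorphism (the target is not even a group), so there is no reason the image of a normal subgroup $M\trianglelefteq L$ should be closed under $\oplus$; you assert this can be ``verified'' but give no mechanism. Second, and more seriously, you explicitly concede that forcing $N$ to be a \emph{subgroup} rather than merely a subgyrogroup is ``the genuinely hardest point'' and that you ``expect to overcome this'' by the ``structural analysis \ldots\ carried out by Foguel and Ungar.'' That is circular: the statement you are trying to prove \emph{is} Foguel and Ungar's Theorem 4.11, and their proof is precisely that structural analysis of twisted subgroups and transversal decompositions. What you have written is a motivated reformulation of the problem together with an appeal to the original source for the substantive step, not an independent proof.
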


\begin{theorem}[Lagrange's Theorem]\label{Thm: Lagrange's Theorem for Gyrogroup}
If $H$ is a subgyrogroup of a gyrogroup $G$, then $\abs{H}$ divides
$\abs{G}$. That is, every gyrogroup has the Lagrange property.
\end{theorem}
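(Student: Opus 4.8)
The plan is to deduce the theorem as a short formal combination of the three results just assembled, with no induction required. The key observation is that the Foguel--Ungar structure theorem, the Lagrange property for gyrocommutative gyrogroups, and the transfer corollary dovetail exactly: the first produces a normal subgroup whose quotient is gyrocommutative, and the latter two then certify the Lagrange property for the two pieces, which the corollary recombines.

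First I would apply Theorem \ref{Thm: Gyrogroup has a normal subgroup, Foguel and Ungar} to obtain a normal subgroup $N$ of $G$ such that the quotient $G/N$ is gyrocommutative. Then I would verify the two hypotheses of the transfer corollary. Since $N$ is a subgroup---a group under the operation inherited from $G$---Proposition \ref{Prop: Subgroup has the weak Lagrange property} shows at once that $N$ has the Lagrange property. Since $G/N$ is gyrocommutative, Theorem \ref{Thm: Lagrange's Theorem for gyrocomm. gyrogroup} gives that $G/N$ has the Lagrange property. Applying Corollary \ref{Cor: N and G/N have LP --> G has LP} then yields that $G$ itself has the Lagrange property; in particular $\abs{H}$ divides $\abs{G}$ for every $H\leqslant G$.

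The essential difficulty is not in this assembly, which is purely formal, but is already absorbed into the two cited black boxes: the Foguel--Ungar theorem producing a gyrocommutative quotient, and the Baumeister--Stein theorem establishing Lagrange for Bruck loops. The one point that warrants care in the write-up is matching hypotheses across the statements. The corollary requires $N$ to be a \emph{normal subgyrogroup}, whereas Foguel--Ungar delivers a \emph{normal subgroup}. These agree because, by Proposition \ref{Prop: Characterization of subgroup}, every subgroup is in particular a subgyrogroup (one with trivial restricted gyrations), and normality---being the kernel of a gyrogroup homomorphism---means the same in both settings. Hence $N$ is legitimately a normal subgyrogroup and the corollary applies, closing the argument.
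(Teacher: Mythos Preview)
Your proposal is correct and follows essentially the same route as the paper: invoke Foguel--Ungar to obtain a normal subgroup $N$ with gyrocommutative quotient, use Proposition~\ref{Prop: Subgroup has the weak Lagrange property} and Theorem~\ref{Thm: Lagrange's Theorem for gyrocomm. gyrogroup} to establish the Lagrange property for $N$ and $G/N$ respectively, and conclude via Corollary~\ref{Cor: N and G/N have LP --> G has LP}. The only cosmetic difference is in justifying that $N$ is a normal subgyrogroup: the paper writes $N=\ker\Pi$ for the canonical projection, whereas you appeal to Proposition~\ref{Prop: Characterization of subgroup} together with the definition of normality---both amount to the same observation.
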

\begin{proof}
Let $G$ be a gyrogroup. By Theorem \ref{Thm: Gyrogroup has a normal
subgroup, Foguel and Ungar}, $G$ has a normal subgroup $N$ such that
$G/N$ is gyrocommutative. Because $N = \ker{\Pi}$, where $\Pi\colon
G\to G/N$ is the canonical projection, $N$ is a normal subgyrogroup
of $G$. By Proposition \ref{Prop: Subgroup has the weak Lagrange
property} and Theorem \ref{Thm: Lagrange's Theorem for gyrocomm.
gyrogroup}, $N$ and $G/N$ have the Lagrange property. By Corollary
\ref{Cor: N and G/N have LP --> G has LP}, $G$ has the Lagrange
property.
\end{proof}

\section{Applications}
\par In this section, we provide some applications
of Lagrange's theorem. Throughout this section, all gyrogroups are
finite.
\begin{proposition}\label{Prop: a^|a| = 1}
Let $G$ be a gyrogroup and let $a\in G$. Then $\abs{a}$ divides
$\abs{G}$. In particular, $\abs{G}\cdot a = 0$.
\end{proposition}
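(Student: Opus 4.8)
The plan is to reduce the statement to Lagrange's Theorem, which has just been established. The key observation is that by Theorem~\ref{Thm: Explicit description of <a>}, the cyclic subgyrogroup $\gen{a}$ is itself a subgyrogroup of $G$, and by definition its cardinality is precisely $\abs{a}$. So the first claim, that $\abs{a}$ divides $\abs{G}$, follows immediately by applying Theorem~\ref{Thm: Lagrange's Theorem for Gyrogroup} to the subgyrogroup $H = \gen{a}$: we have $\abs{a} = \abs{\gen{a}}$ dividing $\abs{G}$.

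For the second claim, I would first record that $\abs{a}$ is finite, since $G$ is finite and $\gen{a} \subseteq G$. By Proposition~\ref{Prop: Characterization of |a|}(1), $\abs{a}\cdot a = 0$, which is the base fact I want to promote to $\abs{G}\cdot a = 0$. Since $\abs{a}$ divides $\abs{G}$, write $\abs{G} = q\,\abs{a}$ for some positive integer $q$. Then using the additivity relation of Proposition~\ref{Prop: m.a + k.a = (m+k).a, integer case}, namely $(m\cdot a)\oplus(k\cdot a) = (m+k)\cdot a$, I can compute $\abs{G}\cdot a = (q\,\abs{a})\cdot a$ by expanding it as a repeated sum of $\abs{a}\cdot a = 0$ blocks; formally, $(q\,\abs{a})\cdot a$ is obtained from $q$ copies of $\abs{a}\cdot a$ via the additivity relation, and each equals $0$, so the total is $0$.

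The step requiring the most care is the last computation, but it is genuinely routine given the tools at hand. Concretely, Proposition~\ref{Prop: m.a + k.a = (m+k).a, integer case} shows that $m\mapsto m\cdot a$ is a homomorphism from $(\Z,+)$ into $(\gen{a},\oplus)$, so $\abs{G}\cdot a = (q\,\abs{a})\cdot a = q\cdot(\abs{a}\cdot a) = q\cdot 0 = 0$, where I am using that $\abs{a}\cdot a = 0$ and that $q\cdot 0 = 0$ by the recursive definition of the notation. There is no real obstacle here: both halves of the proposition are short corollaries of previously established machinery, with Lagrange's Theorem doing the essential work for divisibility and Proposition~\ref{Prop: m.a + k.a = (m+k).a, integer case} handling the arithmetic of $m\cdot a$.
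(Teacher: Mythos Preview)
Your proof is correct and follows essentially the same approach as the paper: apply Lagrange's Theorem to the subgyrogroup $\gen{a}$ to obtain the divisibility, then write $\abs{G} = k\abs{a}$ and use the additivity relation for $m\cdot a$ to conclude $\abs{G}\cdot a = 0$. The paper's version is simply more terse.
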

\begin{proof}
By definition, $\abs{a} = \abs{\gen{a}}$. By Lagrange's theorem,
$\abs{a}$ divides $\abs{G}$. Write $\abs{G} = \abs{a}k$ with
$k\in\N$, so $\abs{G}\cdot a = (\abs{a}k)\cdot a =
\underbrace{\abs{a}\cdot a\oplus \cdots \oplus \abs{a}\cdot
a}_{k\textrm{ copies}} = 0$.
\end{proof}

\par Although we know that a left Bol loop of prime order
is a cyclic group by a result of Burn \cite[Corollary 2]{RB1978}, we
present the following theorem as an application of Lagrange's
theorem.
\begin{theorem}\label{Thm: Gyrogroup of prime order}
If $G$ is a gyrogroup of prime order $p$, then $G$ is a cyclic group
of order $p$ under gyrogroup operation.
\end{theorem}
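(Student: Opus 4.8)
The plan is to produce a generator for $G$ directly, using the divisibility of element orders furnished by Lagrange's theorem. Since $\abs{G} = p \geq 2$, the group $G$ contains at least one element other than the identity, so I would begin by fixing some $a \in G$ with $a \neq 0$. By Definition \ref{Def: order of element in a gyrogroup} together with Theorem \ref{Thm: Explicit description of <a>}, the cyclic subgyrogroup $\gen{a} = \cset{m\cdot a}{m\in\Z}$ contains $a = 1\cdot a$, so $\gen{a}$ is strictly larger than $\set{0}$; hence $\abs{a} = \abs{\gen{a}} > 1$.

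Next I would apply Proposition \ref{Prop: a^|a| = 1}, which gives that $\abs{a}$ divides $\abs{G} = p$. Because $p$ is prime, its only positive divisors are $1$ and $p$, and the previous step rules out $\abs{a} = 1$. Therefore $\abs{a} = p$, which means the subgyrogroup $\gen{a}$ has exactly $p$ elements. Since $\gen{a}$ is a subset of $G$ and $\abs{\gen{a}} = p = \abs{G}$, finiteness forces $\gen{a} = G$.

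Finally, to identify the algebraic type of $G$, I would invoke Proposition \ref{Prop: Cyclic subgyrogroup is a cyclic group} (equivalently Corollary \ref{Cor: Gyrogroup generated by 1 element is cyclic}): for any element $a$, the cyclic subgyrogroup $\gen{a}$ forms a cyclic group with generator $a$ under the gyrogroup operation. Combining this with $G = \gen{a}$ yields that $G$ is itself a cyclic group of order $p$ generated by $a$, completing the argument.

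In contrast to the proof of Lagrange's theorem itself, where the genuine difficulty lies in descending through a gyrocommutative quotient, here there is essentially no obstacle: the statement is a clean corollary of the machinery already in place. The only point requiring a moment's care is the verification that a nonidentity element has order strictly greater than one, so that primality of $p$ can eliminate the trivial case; this is immediate once one recalls that $\gen{a}$ explicitly contains $a$ by Theorem \ref{Thm: Explicit description of <a>}.
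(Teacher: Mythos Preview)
Your proposal is correct and follows essentially the same approach as the paper's proof: pick a nonidentity element $a$, use Lagrange's theorem (via Proposition \ref{Prop: a^|a| = 1}) to force $\abs{a}=p$, conclude $G=\gen{a}$ by finiteness, and then invoke Proposition \ref{Prop: Cyclic subgyrogroup is a cyclic group}. The only difference is that you spell out explicitly why $\abs{a}>1$ by citing Theorem \ref{Thm: Explicit description of <a>}, whereas the paper simply asserts $\abs{a}\ne 1$.
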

\begin{proof}
Let $a$ be a nonidentity element of $G$. Then $\abs{a}\ne 1$ and
$\abs{a}$ divides $p$. It follows that $\abs{a} = p$, which implies
$G = \gen{a}$ since $G$ is finite. By Proposition \ref{Prop: Cyclic
subgyrogroup is a cyclic group}, $\gen{a}$ is a cyclic group of
order $p$, which completes the proof.
\end{proof}

\subsection*{The Cauchy Property}
\par In the loop literature, it is known that left Bol loops of odd
order satisfy the Cauchy property \cite[Theorem 6.2]{TFMKJP2006}.
However, Bol loops fail to satisfy the Cauchy property as Nagy
proves the existence of a simple right Bol loop of exponent $2$ and
of order $96$ \cite[Corollary 3.7]{GN2009}. This also implies that
gyrogroups fail to satisfy the Cauchy property since any Bol loop of
exponent $2$ is necessarily a Bruck loop, hence a gyrocommutative
gyrogroup.

\par In this subsection, we apply Lagrange's theorem and results from loop theory to
establish that some finite gyrogroups satisfy the Cauchy property.
\begin{definition}[The Weak Cauchy Property, WCP]\label{Def: Weak Cauchy property}
\normalfont A finite gyrogroup $G$ is said to have the \textit{weak
Cauchy property} if for every prime $p$ dividing $\abs{G}$, $G$ has
an element of order $p$.
\end{definition}

\begin{definition}[The Strong Cauchy Property, SCP]\label{Def: Strong Cauchy property}
\normalfont A finite gyrogroup $G$ is said to have the
\textit{strong Cauchy property} if every subgyrogroup of $G$ has the
weak Cauchy property.
\end{definition}

\par The Cauchy property is an invariant property of
gyrogroups, as shown in the following proposition.
\begin{proposition}\label{Prop: CP is invariant under isomorphism}
Let $G$ and $H$ be gyrogroups and let $\phi\colon G\to H$ be a
gyrogroup isomorphism.
\begin{enumerate}
\item If $G$ has the weak Cauchy property, then so has $H$.
\item If $G$ has the strong Cauchy property, then so has $H$.
\end{enumerate}
\end{proposition}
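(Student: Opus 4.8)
The plan is to reduce both parts to a single observation: a gyrogroup isomorphism preserves the order of every element. Once this is in hand, part (1) is immediate because an isomorphism is a bijection, and part (2) follows by applying part (1) to suitable restrictions.

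First I would verify that $\phi$ preserves the distinguished structure needed to compute orders. Since $\phi(0) = \phi(0\oplus 0) = \phi(0)\oplus\phi(0)$, the left cancellation law forces $\phi(0) = 0$; and from $\phi(\ominus a)\oplus\phi(a) = \phi(\ominus a\oplus a) = \phi(0) = 0$ one gets that $\phi(\ominus a)$ is the inverse of $\phi(a)$, so $\phi(\ominus a) = \ominus\phi(a)$. A routine induction on $m$ then yields $\phi(m\cdot a) = m\cdot\phi(a)$ for all $m\in\Z$ and all $a\in G$. Using the explicit description $\gen{a} = \cset{m\cdot a}{m\in\Z}$ from Theorem \ref{Thm: Explicit description of <a>}, this gives $\phi(\gen{a}) = \cset{m\cdot\phi(a)}{m\in\Z} = \gen{\phi(a)}$. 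Because $\phi$ is injective, it restricts to a bijection $\gen{a}\to\gen{\phi(a)}$, so $\abs{a} = \abs{\gen{a}} = \abs{\gen{\phi(a)}} = \abs{\phi(a)}$.

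With order-preservation established, part (1) is short: since $\phi$ is a bijection, $\abs{H} = \abs{G}$, so a prime $p$ divides $\abs{H}$ precisely when it divides $\abs{G}$. Given such a $p$, the weak Cauchy property of $G$ supplies an element $a\in G$ with $\abs{a} = p$, and then $\phi(a)$ is an element of $H$ of order $p$; hence $H$ has the weak Cauchy property. For part (2), let $K$ be an arbitrary subgyrogroup of $H$. Then $\phi^{-1}(K)$ is a subgyrogroup of $G$ (the subgyrogroup criterion of Proposition \ref{Pro: Equivalent condition for subgyrogroup} is preserved by $\phi^{-1}$, which is itself a gyrogroup isomorphism), and $\phi$ restricts to a gyrogroup isomorphism $\phi^{-1}(K)\to K$. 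Since $G$ has the strong Cauchy property, $\phi^{-1}(K)$ has the weak Cauchy property; applying part (1) to this restriction shows $K$ has the weak Cauchy property. As $K$ was arbitrary, $H$ has the strong Cauchy property.

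This argument is essentially routine; the only point requiring care is the order-preservation step, and even there the only mild subtlety is confirming $\phi(m\cdot a) = m\cdot\phi(a)$ for negative $m$, which is handled by the identity $m\cdot a = (-m)\cdot(\ominus a)$ together with $\phi(\ominus a) = \ominus\phi(a)$. Thus I do not anticipate a genuine obstacle, and the main effort is bookkeeping.
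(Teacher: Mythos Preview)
Your proof is correct and follows essentially the same approach as the paper: both reduce to showing that $\phi$ preserves element orders via $\phi(m\cdot a) = m\cdot\phi(a)$, then handle part~(2) by pulling back a subgyrogroup through $\phi^{-1}$ and applying part~(1) to the restriction. The only cosmetic difference is that you deduce $\abs{\phi(a)} = \abs{a}$ from the bijection $\gen{a}\to\gen{\phi(a)}$, whereas the paper uses the smallest-positive-integer characterization of order (Proposition~\ref{Prop: Characterization of |a|}(1)); both are equally valid.
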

\begin{proof}
(1) It suffices to prove that $\abs{\phi(a)} = \abs{a}$ for all
$a\in G$. By induction, $\phi(n\cdot a) = n\cdot \phi(a)$ for all
$a\in G$ and all $n\in\N$. Let $a\in G$. Since $\abs{a}\cdot a = 0$,
we have $\abs{a}\cdot \phi(a) = \phi(\abs{a}\cdot a) = \phi(0) = 0$.
If there were a positive integer $m < \abs{a}$ for which $m\cdot
\phi(a) = 0$, then we would have $\phi(m\cdot a) = 0$ and would have
$m\cdot a = 0$, contradicting the minimality of $\abs{a}$. Hence,
$\abs{a}$ is the smallest positive integer such that
$\abs{a}\cdot\phi(a) = 0$, which implies $\abs{\phi(a)} = \abs{a}$
by Proposition \ref{Prop: Characterization of |a|} (1).
\par (2) Let $B\leqslant H$. Set $A = \phi^{-1}(B)$. Then $A\leqslant
G$ and $A$ has the WCP. Since $\res{\phi}{A}$ is a gyrogroup
isomorphism from $A$ onto $B$, $B$ has the WCP by Item 1.
\end{proof}

\begin{corollary}\label{Cor: CP is invariant}
Let $G$ and $H$ be gyrogroups. If $G\cong H$, then $G$ has the weak
(resp. strong) Cauchy property if and only if $H$ has the weak
(resp. strong) Cauchy property.
\end{corollary}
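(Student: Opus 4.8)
The plan is to deduce this corollary directly from Proposition \ref{Prop: CP is invariant under isomorphism}, which already supplies one direction of each biconditional. Since $G\cong H$, there is a gyrogroup isomorphism $\phi\colon G\to H$. First I would apply Proposition \ref{Prop: CP is invariant under isomorphism} to $\phi$ itself: this gives that if $G$ has the weak (resp.\ strong) Cauchy property, then so has $H$, establishing the forward implication of both equivalences at once.

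For the reverse implication, the key observation is that the inverse of a gyrogroup isomorphism is again a gyrogroup isomorphism. Concretely, $\phi^{-1}\colon H\to G$ is a bijection, and because $\phi$ preserves the gyrogroup operation one checks immediately that $\phi^{-1}$ does as well; hence $\phi^{-1}$ is a gyrogroup isomorphism. Applying Proposition \ref{Prop: CP is invariant under isomorphism} to $\phi^{-1}$ then yields that if $H$ has the weak (resp.\ strong) Cauchy property, then so has $G$.

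Combining the two directions gives the asserted equivalences for both the weak and the strong Cauchy property. I do not expect any genuine obstacle here; the corollary is a formal consequence of the proposition together with the symmetry afforded by passing to the inverse isomorphism. The only point worth recording explicitly is that $\phi^{-1}$ is itself a gyrogroup isomorphism, which is what allows Proposition \ref{Prop: CP is invariant under isomorphism} to be invoked symmetrically in both directions.
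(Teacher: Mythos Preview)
Your proposal is correct and matches the paper's approach: the paper states the corollary without an explicit proof, treating it as an immediate consequence of Proposition~\ref{Prop: CP is invariant under isomorphism}, and your argument---applying the proposition to $\phi$ and then to $\phi^{-1}$---is precisely the intended justification.
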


\begin{theorem}\label{Thm: B and H/B have WCP --> H has WCP}
Let $H$ be a subgyrogroup of a gyrogroup $G$ and let $B$ be a normal
subgyrogroup of $H$.
\begin{enumerate}
\item If $B$ and $H/B$ have the weak Cauchy property, then so has $H$.
\item If $B$ and $H/B$ have the strong Cauchy property, then so has $H$.
\end{enumerate}
\end{theorem}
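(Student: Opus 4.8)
The plan is to prove (1) directly and then derive (2) from (1) together with the second isomorphism theorem; the heart of the argument is the extension statement (1), after which (2) is a short deduction.

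For (1), let $p$ be a prime dividing $\abs{H}$. Since the left cosets of $B$ partition $H$ into $\abs{H/B}$ pieces, each being the image of $B$ under a left gyrotranslation $L_h\colon x\mapsto h\oplus x$ and hence of cardinality $\abs{B}$, we have $\abs{H}=\abs{B}\cdot\abs{H/B}$. As $p$ is prime, Euclid's lemma gives $p\mid\abs{B}$ or $p\mid\abs{H/B}$. If $p\mid\abs{B}$, the weak Cauchy property of $B$ produces an element $a\in B\leqslant H$ with $\abs{a}=p$; since $\gen{a}$ depends only on $a$ and the inherited operation, $a$ has order $p$ in $H$ as well, and this case is immediate.

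The remaining case $p\mid\abs{H/B}$ is the main step. The weak Cauchy property of $H/B$ yields a coset $\bar a=a\oplus B$ of order $p$, where $a\in H$; I would then bound $\abs{a}$ from below. Writing $n=\abs{a}$, one has $n\cdot a=0$, and applying the canonical projection $\Pi\colon H\to H/B$ together with the compatibility $\Pi(n\cdot a)=n\cdot\bar a$ (established by the same induction used in Proposition \ref{Prop: CP is invariant under isomorphism}) gives $n\cdot\bar a=\bar 0$. Hence $\abs{\bar a}=p$ divides $n$. Setting $m=n/p$, which is a positive integer, Corollary \ref{Cor: Order of m.a} gives $\abs{m\cdot a}=n/\gcd(n,m)=n/(n/p)=p$, so $m\cdot a$ is the desired element of order $p$ in $H$. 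This establishes (1).

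For (2), let $K\leqslant H$ be arbitrary; it suffices to show $K$ has the weak Cauchy property. Since $B\unlhd H$, the second isomorphism theorem (Theorem \ref{Thm: The second isomorphism theorem}), applied inside $H$, yields $K\cap B\unlhd K$ and $(K\oplus B)/B\cong K/(K\cap B)$. Now $K\cap B$ is a subgyrogroup of $B$, so the strong Cauchy property of $B$ forces $K\cap B$ to have the weak Cauchy property; likewise $(K\oplus B)/B$ is a subgyrogroup of $H/B$, so the strong Cauchy property of $H/B$ gives it the weak Cauchy property, whence by invariance under isomorphism (Corollary \ref{Cor: CP is invariant}) so does $K/(K\cap B)$. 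Applying part (1) to the gyrogroup $K$ with normal subgyrogroup $K\cap B$ shows that $K$ has the weak Cauchy property, and since $K$ was arbitrary, $H$ has the strong Cauchy property. The main obstacle is the lifting step in the case $p\mid\abs{H/B}$ of part (1): one must manufacture an element of $H$ of order exactly $p$ from an order-$p$ coset, and the crux is controlling the order of a chosen preimage and then extracting the right multiple of it, for which the two essential ingredients are the compatibility $\Pi(n\cdot a)=n\cdot\bar a$ of the projection with integer multiplication and the explicit order formula of Corollary \ref{Cor: Order of m.a}.
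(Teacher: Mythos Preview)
Your proof is correct and follows essentially the same route as the paper: the same case split on whether $p$ divides $\abs{B}$ or $\abs{H/B}$, the same lifting of an order-$p$ coset to an element whose order is divisible by $p$ followed by an application of Corollary~\ref{Cor: Order of m.a}, and the same use of the second isomorphism theorem for part~(2). Your deduction that $p\mid\abs{a}$ via $\Pi(n\cdot a)=n\cdot\bar a=\bar 0$ is in fact slightly cleaner than the paper's contradiction argument (which assumes $p\nmid\abs{B}$ and argues that $\gcd(\abs{a},p)=1$ would force $a\in\gen{p\cdot a}\leqslant B$), but the two arguments are minor variants of one another.
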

\begin{proof}
(1) Suppose that $p$ is a prime dividing $\abs{H}$. Since $\abs{H} =
[H\colon B]\abs{B}$, $p$ divides $\abs{H/B}$ or $\abs{B}$. If $p$
divides $\abs{B}$, then $B$ has an element of order $p$ and we are
done. We may therefore assume that $p\nmid \abs{B}$. Hence, $p$
divides $\abs{H/B}$. By assumption, $H/B$ has an element of order
$p$, say $a\oplus B$. By induction, $n\cdot (a\oplus B) = (n\cdot
a)\oplus B$ for all $n\geq 0$. Hence, by Proposition \ref{Prop:
Characterization of |a|} (1), $p$ is the smallest positive integer
such that $p\cdot a\in B$. In particular, $a\not\in B$. Note that
$\gcd{(\abs{a}, p)} = 1$ or $p$. If $\gcd{(\abs{a}, p)} = 1$ were
true, we would have $\abs{p\cdot a} = \dfrac{\abs{a}}{\gcd{(\abs{a},
p)}} = \abs{a}$, and would have $a\in\gen{a} = \gen{p\cdot
a}\leqslant B$, a contradiction. Hence, $\gcd{(\abs{a}, p)} = p$,
which implies $p$ divides $\abs{a}$. Write $\abs{a} = mp$. Then
$\abs{m\cdot a} = \dfrac{\abs{a}}{\gcd{(\abs{a}, m})} = p$, which
finishes the proof of (1).
\par (2) Suppose that $B$ and $H/B$ have the SCP. Let $A\leqslant H$. By assumption,
$A\cap B$ has the WCP. Since $A\oplus B/B \leqslant H/B$, $A\oplus
B/B$ has the WCP. Since $A/A\cap B\cong A\oplus B/B$, $A/A\cap B$
has the WCP. By Item 1, $A$ has the WCP.
\end{proof}

\begin{corollary}\label{Cor: B and G/B have WCP --> G has WCP}
Let $N$ be a normal subgyrogroup of a gyrogroup $G$. If $N$ and
$G/N$ have the weak (strong) Cauchy property, then so has $G$.
\end{corollary}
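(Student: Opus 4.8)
The plan is to deduce this corollary directly from Theorem~\ref{Thm: B and H/B have WCP --> H has WCP} by specializing the ambient subgyrogroup $H$ to the whole gyrogroup $G$. The key observation is that the theorem is stated for an arbitrary subgyrogroup $H$ of $G$ together with a normal subgyrogroup $B$ of $H$, and nothing prevents us from choosing $H = G$. With that choice, the hypothesis ``$B$ is a normal subgyrogroup of $H$'' becomes precisely ``$N$ is a normal subgyrogroup of $G$,'' and the quotient $H/B$ becomes $G/N$. Thus the data of the corollary is exactly the data of the theorem in the special case $H = G$.

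Concretely, I would argue as follows. Set $H = G$ and $B = N$ in Theorem~\ref{Thm: B and H/B have WCP --> H has WCP}. Since $N \unlhd G$, the pair $(G, N)$ satisfies the standing hypotheses of that theorem. For the weak Cauchy property, suppose $N$ and $G/N$ have the WCP; then Item~(1) of the theorem, applied with $H = G$ and $B = N$, yields that $G = H$ has the WCP. For the strong Cauchy property, suppose instead that $N$ and $G/N$ have the SCP; then Item~(2) of the theorem, applied with the same substitution, yields that $G$ has the SCP. This disposes of both cases of the corollary.

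The proof is therefore a one-line specialization, entirely parallel to the deduction of Corollary~\ref{Cor: N and G/N have LP --> G has LP} from Proposition~\ref{Prop: B and K/B have LP --> K has LP}, and I expect no genuine obstacle. The only point that requires a moment's care is verifying that substituting $H = G$ is legitimate, i.e.\ that $G$ counts as a subgyrogroup of itself; this is immediate since $G$ is trivially a subgyrogroup of $G$, so the hypotheses of the theorem are met. I would write the proof in a single sentence, noting the substitution $H = G$ and $B = N$ and citing the two items of Theorem~\ref{Thm: B and H/B have WCP --> H has WCP} for the weak and strong cases respectively.
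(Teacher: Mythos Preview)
Your proposal is correct and matches the paper's approach exactly: the corollary is stated immediately after Theorem~\ref{Thm: B and H/B have WCP --> H has WCP} without an explicit proof, the intended argument being precisely the specialization $H=G$, $B=N$ that you describe (parallel to the deduction of Corollary~\ref{Cor: N and G/N have LP --> G has LP} from Proposition~\ref{Prop: B and K/B have LP --> K has LP}).
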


\par Consider a gyrogroup $G$ of order $pq$, where $p$ and $q$ are primes.
If $pq$ is odd, by a result of Foguel, Kinyon, and Phillips
\cite[Theorem 6.2]{TFMKJP2006}, $G$ has the weak Cauchy property.
Since any subgyrogroup of $G$ is of order $1, p, q$ or $pq$, every
subgyrogroup of $G$ has the weak Cauchy property as well. This
implies that $G$ has the strong Cauchy property. If $pq$ is even, at
least one of $p$ or $q$ must be $2$. Hence, $G$ is of order
$2\tilde{p}$, where $\tilde{p}$ is a prime. By a result of Burn
\cite[Theorem 4]{RB1978}, $G$ is a group, hence has the strong
Cauchy property. This proves the following theorem.
\begin{theorem}[Cauchy's Theorem]\label{Thm: Gyrogroup of order pq has WCP}
Let $p$ and $q$ be primes. Every gyrogroup of order $pq$ has the
strong Cauchy property.
\end{theorem}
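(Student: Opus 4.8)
The plan is to argue by a case split on the parity of $pq$, using Lagrange's theorem to pin down the possible orders of subgyrogroups and then treating each order separately. First I would observe, by Theorem \ref{Thm: Lagrange's Theorem for Gyrogroup}, that if $H\leqslant G$ then $\abs{H}$ divides $\abs{G}=pq$, so $\abs{H}\in\set{1, p, q, pq}$. Since the strong Cauchy property requires exactly that each such $H$ has the weak Cauchy property, it suffices to verify the WCP for subgyrogroups of each of these four orders.

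Suppose first that $pq$ is odd, so both $p$ and $q$ are odd primes. A subgyrogroup $H$ with $\abs{H}=1$ has the WCP vacuously. If $\abs{H}$ is the prime $p$ (respectively $q$), then by Theorem \ref{Thm: Gyrogroup of prime order} $H$ is a cyclic group of order $p$ (respectively $q$), so it contains an element of that order and hence has the WCP. The only subgyrogroup of order $pq$ is $G$ itself; because $G$ has odd order and every gyrogroup is a left Bol loop, the result of Foguel, Kinyon, and Phillips on odd-order Bol loops \cite[Theorem 6.2]{TFMKJP2006} gives that $G$ has the WCP. Thus every subgyrogroup has the WCP, and $G$ has the strong Cauchy property in this case.

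Now suppose $pq$ is even. Then one of $p, q$ equals $2$, so $\abs{G}=2\tilde{p}$ for some prime $\tilde{p}$. Here I would invoke Burn's structure theorem \cite[Theorem 4]{RB1978}, which asserts that a left Bol loop of order $2\tilde{p}$ is in fact a group; hence $G$ is a group under $\oplus$. Every subgyrogroup of $G$ is closed under $\oplus$ and inverses by the subgyrogroup criterion, and $\oplus$ is associative on $G$, so each subgyrogroup is a genuine subgroup. The classical Cauchy theorem for finite groups then supplies, for each prime dividing the order of such a subgroup, an element of that order, so every subgyrogroup has the WCP and $G$ has the strong Cauchy property.

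The gyrogroup-internal steps are routine: the reduction to the four admissible orders is immediate from Lagrange, and the identification of prime-order subgyrogroups with cyclic groups is already in hand from Theorem \ref{Thm: Gyrogroup of prime order}. The real content is imported from loop theory, and the main obstacle is precisely the justification of the two external inputs — the Foguel–Kinyon–Phillips Cauchy property for odd-order Bol loops and Burn's theorem forcing order $2\tilde{p}$ to be a group — together with the (easy but essential) observation that a gyrogroup qualifies as a left Bol loop, so that these loop-theoretic results apply. Once those citations are secured, the two cases together close the argument.
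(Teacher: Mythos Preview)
Your proposal is correct and follows essentially the same approach as the paper: a parity split on $pq$, with the odd case handled via Lagrange's theorem to list the possible subgyrogroup orders and the Foguel--Kinyon--Phillips result \cite[Theorem 6.2]{TFMKJP2006} for the full gyrogroup, and the even case reduced to groups via Burn's theorem \cite[Theorem 4]{RB1978}. The only difference is expository---you spell out the prime-order subgyrogroup case via Theorem \ref{Thm: Gyrogroup of prime order}, whereas the paper leaves this implicit.
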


\begin{theorem}\label{Thm: Gyrogroup of order pq has two generators}
Let $p$ and $q$ be primes and let $G$ be a gyrogroup of order $pq$.
If $p = q$, then $G$ is a group. If $p\ne q$, then $G$ is generated
by two elements; one has order $p$ and the other has order $q$.
\end{theorem}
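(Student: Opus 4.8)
The plan is to handle the two assertions separately: the case $p\ne q$ follows quickly by combining the Cauchy property with Lagrange's theorem, while the case $p=q$ is the one that needs a genuine structural input, and I expect it to carry essentially all of the difficulty.

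First I would dispose of the case $p\ne q$. By Theorem~\ref{Thm: Gyrogroup of order pq has WCP}, $G$ has the strong, and in particular the weak, Cauchy property, so there are $a,b\in G$ with $\abs{a}=p$ and $\abs{b}=q$. Set $H=\gen{a,b}$ (Proposition~\ref{Prop: Smallest subgyrogroup containing a set}). Since $a,b\in H$ and $H$ is itself a finite gyrogroup, Proposition~\ref{Prop: a^|a| = 1} applied inside $H$ gives that both $p=\abs{a}$ and $q=\abs{b}$ divide $\abs{H}$; as $p,q$ are distinct primes, $pq\mid\abs{H}$. On the other hand $\abs{H}\mid\abs{G}=pq$ by Theorem~\ref{Thm: Lagrange's Theorem for Gyrogroup}. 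Hence $\abs{H}=pq=\abs{G}$, and since $H\subseteq G$ are finite of equal cardinality, $H=G$. Thus $G=\gen{a,b}$ is generated by two elements, of orders $p$ and $q$.

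For the case $p=q$ the task is to show that a gyrogroup $G$ of order $p^{2}$ is a group. I would first note that if $G$ has an element $c$ of order $p^{2}$, then $G=\gen{c}$ by cardinality, and $\gen{c}$ is a cyclic group by Proposition~\ref{Prop: Cyclic subgyrogroup is a cyclic group}, so $G$ is a group. This reduces matters to the case in which every nonidentity element has order $p$, that is, $G$ has exponent $p$, and here generation arguments no longer force associativity. This exponent-$p$ subcase is the main obstacle. To clear it I would invoke a structural result: either cite Burn's analysis of Bol loops of order $p^{2}$ in \cite{RB1978} (gyrogroups being left Bol loops), from which such loops are groups, or else reduce through Theorem~\ref{Thm: Gyrogroup has a normal subgroup, Foguel and Ungar} to a normal subgroup $N$ with $G/N$ gyrocommutative and treat the cases $\abs{N}\in\set{1,p,p^{2}}$ separately, the quotients of order $p$ being cyclic groups by Theorem~\ref{Thm: Gyrogroup of prime order} and the order-$p^{2}$ gyrocommutative (Bruck-loop) case being settled by the known fact that Bruck loops of order $p^{2}$ are groups. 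The prime $p=2$ causes no trouble, since then $\abs{G}=4$ and every loop of order at most four is a group.
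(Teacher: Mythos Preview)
Your proof is correct. For the case $p=q$ you end up exactly where the paper does: the structural input is Burn's theorem that a left Bol loop of order $p^{2}$ is a group \cite[Theorem 5]{RB1978}, and the paper simply cites this in one line without the preliminary reduction to exponent $p$ or the alternative Foguel--Ungar route you sketch (which, as you note, still needs an external ``order $p^{2}$'' fact of the same strength).

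For the case $p\ne q$ your argument differs from the paper's. The paper takes $a,b$ of orders $p,q$, shows $\gen{a}\cap\gen{b}=\set{0}$ via Lagrange, and then verifies directly that the $pq$ elements $(m\cdot a)\oplus(n\cdot b)$ with $0\le m<p$, $0\le n<q$ are pairwise distinct (using the right cancellation law $x\boxplus(\ominus y)$ to separate the two factors), whence this set equals $G=\gen{a,b}$. Your route is shorter: since $p=\abs{a}$ and $q=\abs{b}$ both divide $\abs{\gen{a,b}}$ by Proposition~\ref{Prop: a^|a| = 1}, and $\abs{\gen{a,b}}$ divides $pq$ by Lagrange, you get $\gen{a,b}=G$ by pure divisibility. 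The paper's version buys a concrete normal form for the elements of $G$; yours avoids any manipulation with $\boxplus$ and cancellation and is the cleaner way to reach the bare statement of the theorem.
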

\begin{proof}
In the case $p = q$, $G$ is a left Bol loop of order $p^2$, hence
must be a group by Burn's result \cite[Theorem 5]{RB1978}.
\par Suppose that $p\ne q$. Let $a$ and $b$ be elements of order $p$ and $q$,
\mbox{respectively}. By Lagrange's theorem, $\gen{a}\cap\gen{b} =
\set{0}$. For all $m, n, s, t\in\Z$, if $(m\cdot a)\oplus(n\cdot b)
= (s\cdot a)\oplus(t\cdot b)$, then $\ominus(s\oplus a)\oplus
(m\cdot a) = (t\cdot b)\boxplus(\ominus(n\cdot b)) = (t\cdot
b)\ominus(n\cdot b)$ belongs to $\gen{a}\cap\gen{b}$. Hence,
$\ominus(s\oplus a)\oplus (m\cdot a) = 0$ and $(t\cdot
b)\ominus(n\cdot b) = 0$ and so $m\cdot a = s\cdot a$ and $n\cdot b
= t\cdot b$. This proves $\cset{(m\cdot a)\oplus(n\cdot b)}{0\leq
m<p, 0\leq n<q}$ contains $pq$ distinct elements of $G$. Since $G$
is finite, it follows that
$$G = \cset{(m\cdot a)\oplus(n\cdot b)}{0\leq m<p, 0\leq n<q} = \gen{a, b}.\eqno\qedhere$$
\end{proof}

\par In general, gyrogroups of order $pq$, where $p$ and $q$ are distinct primes
not equal to $2$, need not be groups. This is a situation where
gyrogroups are different from Moufang loops. As Moufang loops are
\textit{diassociative}, every Moufang loop generated by two elements
must be a group. This implies that Moufang loops of order $pq$ are
groups \cite[Proposition 3]{OC1974}.

\par Let $G$ be a finite \textit{nongyrocommutative} gyrogroup. By
Theorem  \ref{Thm: Gyrogroup has a normal subgroup, Foguel and
Ungar}, $G$ has a normal subgroup $N$ such that $G/N$ is
gyrocommutative. Because $G$ is nongyrocommutative, we have $N$ is
nontrivial, since otherwise $\Pi\colon G\to G/N$ would be a
gyrogroup isomorphism and $G$ and $G/N$ would be isomorphic
gyrogroups. From this we can deduce the following results.

\begin{theorem}\label{Thm: CP for Nongyrocomm. gyrogroup of order p^3} Let $p$ be a prime. Every
nongyrocommutative gyrogroup of order $p^3$ has the strong Cauchy
property.
\end{theorem}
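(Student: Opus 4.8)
The plan is to realize $G$ as an extension whose two constituents have already-understood orders, and then invoke the additive behavior of the strong Cauchy property recorded in Corollary~\ref{Cor: B and G/B have WCP --> G has WCP}. First I would apply Theorem~\ref{Thm: Gyrogroup has a normal subgroup, Foguel and Ungar} to produce a normal subgroup $N$ of $G$ with $G/N$ gyrocommutative; since $N = \ker\Pi$ for the canonical projection $\Pi\colon G\to G/N$, $N$ is a normal subgyrogroup of $G$. Here the nongyrocommutativity of $G$ is decisive: if $N$ were trivial then $\Pi$ would be a gyrogroup isomorphism, forcing $G\cong G/N$ to be gyrocommutative, a contradiction. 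Hence $N\neq\set{0}$, and Lagrange's theorem (Theorem~\ref{Thm: Lagrange's Theorem for Gyrogroup}) gives $\abs{N}\in\set{p, p^2, p^3}$, whence $\abs{G/N}\in\set{1, p, p^2}$. It is precisely this bound $\abs{G/N}\leq p^2$ that keeps both factors inside the range of known results.

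Next I would verify that $N$ has the strong Cauchy property. As $N$ is a subgroup, it is a group under $\oplus$, and---exactly as in the proof of Proposition~\ref{Prop: Subgroup has the weak Lagrange property}---every subgyrogroup $H$ of $N$ is itself a subgroup, hence a subgroup of the $p$-group $N$; thus $\abs{H}$ is a power of $p$. If $H$ is nontrivial, then $p$ is the only prime dividing $\abs{H}$ and Cauchy's theorem for groups supplies an element of order $p$ in $H$, so $H$ has the weak Cauchy property. Therefore $N$ has the strong Cauchy property. (The same reasoning shows that every finite group, regarded as a gyrogroup, has the strong Cauchy property.)

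It then remains to dispose of $G/N$ by cases on its order. If $\abs{G/N} = 1$, the strong Cauchy property holds vacuously; if $\abs{G/N} = p$, then $G/N$ is a cyclic group of order $p$ by Theorem~\ref{Thm: Gyrogroup of prime order} and hence has the strong Cauchy property by the previous paragraph; and if $\abs{G/N} = p^2$, then $G/N$ is a gyrogroup of order $p\cdot p$, which has the strong Cauchy property by Theorem~\ref{Thm: Gyrogroup of order pq has WCP}. In all cases both $N$ and $G/N$ have the strong Cauchy property, so Corollary~\ref{Cor: B and G/B have WCP --> G has WCP} yields that $G$ does as well.

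I do not expect any single hard computation: the argument is an assembly of earlier theorems. The one step that genuinely carries the weight is the reduction at the start---using nongyrocommutativity to force $N\neq\set{0}$ and thereby push $\abs{G/N}$ strictly below $p^3$, out of the only order for which no prior structural result is available. Everything downstream is routine case-checking against the weak-Cauchy toolbox of this section.
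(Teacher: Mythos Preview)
Your proposal is correct and follows essentially the same route as the paper: use Theorem~\ref{Thm: Gyrogroup has a normal subgroup, Foguel and Ungar} together with nongyrocommutativity to obtain a nontrivial normal subgroup $N$, then apply Corollary~\ref{Cor: B and G/B have WCP --> G has WCP} after checking that $N$ and $G/N$ each have the strong Cauchy property. The only cosmetic difference is that the paper handles the case $\abs{G/N}=p^2$ by observing (via Burn's result, as in the proof of Theorem~\ref{Thm: Gyrogroup of order pq has two generators}) that $G/N$ is then a group, whereas you invoke Theorem~\ref{Thm: Gyrogroup of order pq has WCP} directly.
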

\begin{proof}
Let $G$ be a nongyrocommutative gyrogroup of order $p^3$. As noted
above, $G$ has a nontrivial normal subgroup $N$. By Lagrange's
theorem, $\abs{N} = p, p^2$ or $p^3$. If $\abs{N} = p^3$, then $G =
N$ is a group, hence has the SCP. If $\abs{N}\in\set{p, p^2}$, then
$\abs{N}\in\set{p, p^2}$. In any case, $N$ and $G/N$ form groups.
Hence, $N$ and $G/N$ have the SCP and by Corollary \ref{Cor: B and
G/B have WCP --> G has WCP}, $G$ has the SCP.
\end{proof}

\begin{theorem}\label{Thm: CP for Nongyrocomm. gyrogroup of order pqr}
Let $p, q$ and $r$ be primes. Every nongyrocommutative gyrogroup of
order $pqr$ has the strong Cauchy property.
\end{theorem}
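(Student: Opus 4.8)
The plan is to adapt the argument used for the preceding theorem on order $p^3$: exploit the normal subgroup produced by Theorem~\ref{Thm: Gyrogroup has a normal subgroup, Foguel and Ungar} and the extension result for the Cauchy property, Corollary~\ref{Cor: B and G/B have WCP --> G has WCP}. First I would invoke the observation recorded just above: since $G$ is nongyrocommutative, Theorem~\ref{Thm: Gyrogroup has a normal subgroup, Foguel and Ungar} gives a \emph{nontrivial} normal subgroup $N$ of $G$ with $G/N$ gyrocommutative. By Lagrange's theorem (Theorem~\ref{Thm: Lagrange's Theorem for Gyrogroup}), $\abs{N}$ divides $pqr$ and $\abs{N}>1$; consequently $\abs{G/N}=pqr/\abs{N}$ is a product of at most two of the primes $p,q,r$, i.e. it equals $1$, a prime, or a product of two primes.

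The crux is to verify that both $N$ and $G/N$ have the strong Cauchy property, after which the conclusion is immediate. For $N$, I would note that $N$ is a subgroup of $G$, so every subgyrogroup of $N$ again has trivial restricted gyroautomorphisms and is therefore a subgroup, exactly as in the proof of Proposition~\ref{Prop: Subgroup has the weak Lagrange property}; being a finite group it satisfies the classical Cauchy theorem, so $N$ has the strong Cauchy property. For $G/N$ I would run through the three possibilities for its order: order $1$ is trivial; prime order makes $G/N$ a cyclic group by Theorem~\ref{Thm: Gyrogroup of prime order}, hence a group with the strong Cauchy property; and order equal to a product of two primes gives the strong Cauchy property by Cauchy's Theorem (Theorem~\ref{Thm: Gyrogroup of order pq has WCP}). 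Note that the gyrocommutativity of $G/N$ is not actually needed here, since the order-$pq$ result is unconditional.

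Finally, with $N$ and $G/N$ both carrying the strong Cauchy property, Corollary~\ref{Cor: B and G/B have WCP --> G has WCP} yields the strong Cauchy property for $G$. I expect no deep difficulty; the only point requiring care is the divisor bookkeeping of the first paragraph, where the hypothesis that $\abs{G}$ has exactly three prime factors is precisely what forces $\abs{G/N}$ into the already-handled range of at most two prime factors once $N$ is known to be nontrivial. The coincidence case $p=q=r$ is automatically subsumed, recovering the order-$p^3$ result, so no separate treatment of equalities among $p,q,r$ is required.
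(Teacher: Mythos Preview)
Your proposal is correct and follows essentially the same approach as the paper, which simply states that the proof ``follows the same steps as in the proof of Theorem~\ref{Thm: CP for Nongyrocomm. gyrogroup of order p^3}.'' You have faithfully carried out that adaptation, correctly invoking Theorem~\ref{Thm: Gyrogroup of order pq has WCP} for the case where $\abs{G/N}$ is a product of two primes (rather than claiming $G/N$ is a group, which need not hold here).
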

\begin{proof}
The proof follows the same steps as in the proof of Theorem
\ref{Thm: CP for Nongyrocomm. gyrogroup of order p^3}.
\end{proof}

\section*{Acknowledgements}
This work was completed with the support of Development and
Promotion of \mbox{Science} and Technology Talents Project (DPST),
Institute for Promotion of \mbox{Teaching} Science and Technology
(IPST), Thailand.

\bibliographystyle{amsplain}
\bibliography{Lagrange_Theorem}
\end{document}